\newtheorem{theorem}{Theorem}[section]
\newtheorem{lemma}[theorem]{Lemma}
\newtheorem{proposition}[theorem]{Proposition}
\newtheorem{corollary}[theorem]{Corollary}
\theoremstyle{definition}
\newtheorem{definition}[theorem]{Definition}
\newtheorem{remark}[theorem]{Remark}
\numberwithin{equation}{section}
\begin{document}

\baselineskip=15pt

\title[Moduli spaces of vector bundles]{Moduli spaces of vector bundles 
on a singular rational ruled surface}

\author[U. N. Bhosle]{Usha N. Bhosle}

\address{Department of Mathematics, Indian Institute of Science, Bangalore, India}

\email{usnabh07@gmail.com}

\author[I. Biswas]{Indranil Biswas}

\address{School of Mathematics, Tata Institute of Fundamental
Research, Homi Bhabha Road, Mumbai 400005, India}

\email{indranil@math.tifr.res.in}

\subjclass[2000]{Primary 14H60; Secondary 14P99}

\keywords{Vector bundles; moduli; singular ruled surface; rationality.}

\date{}

\thanks{This work was finalized during the first author's tenure as Raja Ramanna 
Fellow at the Indian Institute of Science, Bangalore. The second author is 
supported by J. C. Bose Fellowship.}

\begin{abstract}
We study moduli spaces $M_X(r,c_1,c_2)$ parametrizing slope semistable vector bundles of 
rank $r$ and fixed Chern classes $c_1, c_2$ on a ruled surface whose base is a 
rational nodal curve. We show that under certain conditions, these moduli 
spaces are irreducible, smooth and rational (when non-empty). We also prove that they 
are non-empty in some cases.

We show that for a rational ruled surface defined over real numbers, the moduli
space $M_X(r,c_1,c_2)$ is rational as a variety defined over $\mathbb R$.
\end{abstract}

\maketitle

\section{Introduction}

Vector bundles on smooth complex ruled surfaces have been studied by many authors from 
different points of view, the case of rank two being studied most extensively. Let $X$ be a complex
projective surface equipped with a polarization $H$, and let $M_{X,H}(r, 
c_1,c_2)$ denote the moduli space of $H$-semistable (slope semistable) vector bundles on $X$ of rank $r$ with 
fixed Chern classes $c_1 \,\in\, {\rm Pic}(X)$ and $c_2 \,\in\, \mathbb{Z}$. 
When $X$ is a smooth ruled surface or a blow up of it, Walter, \cite{W}, found a precise sufficient condition
on $H$ for $M_{X,H}(r, c_1,c_2)$ to be irreducible whenever it is non-empty.
He also proved that this moduli space is normal and its subvariety $M^s_{X,H}(r,c_1,c_2)$ that 
parametrizes stable vector bundles is smooth. Furthermore, he gave examples of $X$ and $H$ (not satisfying
his condition) for which the moduli spaces $M_{X,H}(2, c_1,c_2)$ are reducible for
some small $c_1, c_2$.

Another interesting property investigated by many is the rationality of
the scheme $M_{X,H}(r, c_1,c_2)$. The 
question is the following: If $X$ is rational, is $M_{X,H}(r, c_1,c_2)$ also rational?
Although several cases are known where the answer is 
positive, the answer is not known in general. Costa and Mir\'o-Roig 
explicitly constructed generic $H$-stable vector bundles on a smooth Hirzebruch surface
$X$ for many values of $r\, ,c_1\, , c_2$, \cite{CM}, and showed non-emptiness for these
moduli spaces. They also proved that the moduli space is a rational variety in these
cases \cite[Theorem A]{CM}.

In this paper, we generalize these results to singular rational ruled surfaces. Let $X$ be a complex 
rational ruled surface whose base is an integral rational projective curve $Y$ of arithmetic genus $g$
with $g$ nodes (ordinary double points) as its only singularities. We study the geometric 
properties of $X$. In particular, we prove that $X$ is a Gorenstein variety, compute its invariants 
and determine the dualizing sheaf explicitly. Following \cite{W}, we establish a sufficient condition 
on $H$ for the moduli space $M_{X,H}(r,c_1,c_2)$ to be irreducible. Furthermore, we prove the existence
of polarizations $H$ satisfying this condition (see Theorem \ref{thm1}).

We also investigate the rationality question for $M_{X,H}(r,c_1,c_2)$. Let $\pi \colon \overline{Y} 
\longrightarrow Y$ be the normalization map for the base curve. If $X\,:=\, {\mathbb P}({\mathcal E})$, then $Z\,:=\, 
{\mathbb P}(\pi^*{\mathcal E})$ is a smooth Hirzebruch surface. Let $H_Z$ denote the polarization on $Z$ 
which is the pullback of the polarization $H$ on $X$. We show that whenever $M^s_{Z,H_Z}(r,c_1,c_2)$ is rational, the 
variety $M^s_{X,H}(r,c_1,c_2)$ is also rational (see Theorem \ref{theorem2}). In view of the results of 
\cite{CM}, this yields rationality of $M^s_{X,H}(r,c_1,c_2)$ in several cases.

Finally we study singular real rational ruled surfaces $X_{\mathbb R}$ whose
base $Y_{\mathbb R}$ a rational curve defined over $\mathbb R$. Let $\pi'\,:\, C\, \longrightarrow\, Y_{\mathbb R}$ be the normalization map. Let
${\mathcal E}_{\mathbb R}$ be a real vector bundle of rank $2$ over $Y_{\mathbb R}$ such that
$X_{\mathbb R}\, =\, \mathbb{P} ({\mathcal E}_{\mathbb R})$. Let $Z_{\mathbb R}\, := \mathbb{P} (\pi'^* {\mathcal E}_{\mathbb R})$ be 
the real ruled surface with base $C$. For a real ruled surface $Z_{\mathbb R}$ with base an 
anisotropic conic $C$, the rationality question of $M^s_{Z_{\mathbb R},H_Z}(r,c_1,c_2)$ was studied in \cite{BS}.

We prove that $M^s_{X_{\mathbb R},H}(r,c_1,c_2)$ is a real rational variety if $M^s_{Z_{\mathbb 
R},H_Z}(r,c_1,c_2)$ is a real rational variety (see Theorem \ref{theorem3}). Coupled with the results of 
\cite{BS}, this gives necessary and sufficient conditions for $M^s_{X_{\mathbb R},H}(r,c_1,c_2)$ to be 
a real rational variety (Theorem \ref{theorem4}).

\section{Singular ruled surfaces}

In this section, we define a singular rational ruled surface that we are 
interested in and study its properties.

\subsection{Notation}\label{notation2.1}

Let $Y$ be an integral rational projective curve of arithmetic genus $g$ over $\mathbb C$
with only nodes (ordinary double points) as singularities. Therefore, $Y$ has exactly
$g$ singular points. Let
$y_1\, , \cdots\, , y_g$ be the singular points of $Y$. Let
$$\pi\,:\, {\overline Y} \,\longrightarrow \, Y$$ 
be the normalization map. Then ${\overline Y}\,=\, \mathbb{P}^1_{\mathbb C}$ because
$Y$ is rational. For $1\,\leq\, j\, \leq\, g$, let $\{x_j\, ,z_j\}\,\subset\,
\overline Y$ be the pair of points over $y_j\,\in\, Y$. 
 
Take an algebraic vector bundle $\mathcal E$ over $Y$ of rank two and degree $-e$. Let 
$$X\,:=\, {\mathbb P}({\mathcal E})\,:\, X \stackrel{p_1}{\longrightarrow}\, Y$$
be the corresponding ${\mathbb P}^1_{\mathbb C}$--bundle over $Y$. Then 
$$Z\,:=\, {\mathbb P}({\mathcal E})\times_Y {\overline Y}\,=\,
{\mathbb P}(\pi^*{\mathcal E})$$
is a smooth Hirzebruch surface. Let $$p_0\,:\, Z \,\longrightarrow\,{\overline Y}$$ be the
projection to the second factor of the fiber product. The relatively ample tautological line bundles on $X$ and $Z$
will be denoted by ${\mathcal O}_{p_1}(1)$ and ${\mathcal O}_{p_0}(1)$ respectively. 

We fix an ample line bundle $H$ on $X$. Let $$p\,:\, Z\,=\, X\times_Y\overline{Y}\,
\longrightarrow\, X$$ be the projection to the first factor of the
fiber product. Define the line bundle
$$H_Z\,:=\, p^*H\,\longrightarrow\, Z\, .$$ Since $p$ is a finite map, and
$H$ is ample, it follows that $H_Z$ is ample.

By tensoring ${\mathcal E}$ with a line bundle, we may assume that $Z\,=\,
{\mathbb P} ({\mathcal O}_{\overline Y} \oplus {\mathcal L})$, because on one hand
$\pi^*{\mathcal E} \,\cong \,{\mathcal L}_1 \oplus {\mathcal L}_2 \,\cong\,
{\mathcal L}_1 \otimes ({\mathcal O}_{\overline Y} \oplus
({\mathcal L}_2 \otimes {\mathcal L}_1^{-1}))$, on the other
hand, ${\mathcal L}_1\,=\, \pi^*N$ for some line bundle $N$ on $Y$,
so $\pi^*({\mathcal E} \otimes N^{-1}) \,\cong\, {\mathcal O}_{\overline Y} \oplus
({\mathcal L}_2 \otimes {\mathcal L}_1^{-1})$. The inclusion of ${\mathcal
O}_{\overline Y}$ in ${\mathcal O}_{\overline Y} \oplus ({\mathcal L}_2 \otimes
{\mathcal L}_1^{-1})$ defines an irreducible effective divisor on $Z$; we denote
this divisor by $C_0$.
 
We have a commutative diagram
 $$
 \begin{array}{ccc}
 Z & \stackrel{p}{\longrightarrow} & X \\
 p_0\Big\downarrow ~\,\, ~  &   &  p_1\Big\downarrow~\,\, ~ \\
 {\overline Y}  &  \stackrel{\pi}{\longrightarrow}  &  Y 
 \end{array}
 $$
For $1\, \leq \, j\, \leq\, g$, define $$P_j\,:=\, p_1^{-1}(y_j)\, , ~\ P_{x_j}\,:=\,
p_0^{-1}(x_j)\, , ~\ P_{z_j}\,:= \,p_0^{-1}(z_j)\, ,$$
so $p^{-1}(P_j) \,= \,P_{x_j} \coprod P_{z_j}$. The restrictions $p\vert_{P_{x_j}}$ and
$p\vert_{P_{z_j}}$ identify $P_j$ with $P_{x_j}$ and $P_{z_j}$
respectively. Therefore, we obtain a canonical isomorphism 
\begin{equation}\label{tau}
\tau_j\,:\, P_{x_j}\, \stackrel{\sim}{\longrightarrow}\, P_{z_j}\, .
\end{equation}
We note that $\tau_j$ is induced by the canonical identification of $(\pi^* {\mathcal E})_{x_j}$ with $(\pi^* {\mathcal E})_{z_j}$.

The Hirzebruch surface $Z$ has been studied extensively (see \cite[Chapter V]{Ha} for generalities on Hirzebruch surfaces). We start with some geometric properties of $X$.

\begin{lemma}\label{l2.1} \mbox{}
\begin{enumerate}
\item The variety $X$ is semi-normal; the disjoint union $\bigcup_{j=1}^g P_j$
is the non-normal locus.
\item The variety $X$ is Gorenstein.
\item The dualizing sheaf $\omega_X$ of $X$ is a locally free sheaf of rank one.
\end{enumerate}
\end{lemma}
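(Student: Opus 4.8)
The plan is to realize $X$ as a gluing of the smooth Hirzebruch surface $Z$ and to reduce all three assertions to a single local model. First I would check that the finite map $p\colon Z\to X$ is the normalization of $X$: over the smooth locus $Y\smallsetminus\{y_1,\dots,y_g\}$ the map $\pi$ is an isomorphism, so $Z\,=\,X\times_Y\overline Y$ is isomorphic to $X$ there, whence $p$ is birational; as $Z$ is smooth (hence normal) and $p$ is finite, $p$ is the normalization map. The description $p^{-1}(P_j)=P_{x_j}\amalg P_{z_j}$ together with the isomorphisms $\tau_j$ then shows that $X$ is obtained from $Z$ by amalgamating, for each $j$, the two disjoint fibres $P_{x_j}$ and $P_{z_j}$ along $\tau_j$ (a pushout along the conductor). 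Since $\tau_j$ is induced by the canonical identification of $(\pi^*{\mathcal E})_{x_j}$ with $(\pi^*{\mathcal E})_{z_j}$, the gluing is compatible with the projection to the base, so that near a point of $P_j$ the surface $X$ is, formally (or \'etale) locally, the product of the node of $Y$ with the fibre direction. Concretely, the completed local ring of $X$ at such a point is
\[
\mathbb{C}[[u,v,w]]/(uv)\, ,
\]
the two branches $u=0$ and $v=0$ corresponding to $P_{x_j}$ and $P_{z_j}$; off $\bigcup_j P_j$ the map $p$ is an isomorphism and $X$ is smooth.

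With this local model in hand the three parts follow from standard facts about the affine ring $A:=\mathbb{C}[u,v,w]/(uv)$. For part (1), $A$ is reduced, and its two branches $\{u=0\}$, $\{v=0\}$ are smooth and meet transversally along the reduced line $\{u=v=0\}$; separating them gives the normalization, and no sub-integral extension can be proper, so $A$ is semi-normal, whence $X$ is semi-normal (semi-normality being local). The non-normal locus is precisely the set where $p$ fails to be an isomorphism, namely $\bigcup_{j=1}^g P_j$, which one confirms by noting that the conductor of $p$ is supported exactly there. For part (2), the same model is a hypersurface in $\mathbb{A}^3$ cut out by the single equation $uv=0$, hence a complete intersection and therefore Gorenstein; since $X$ is smooth away from $\bigcup_j P_j$, it is Gorenstein everywhere. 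Part (3) is then a formal consequence of (2): a Gorenstein scheme (in particular Cohen--Macaulay of pure dimension two) has an invertible dualizing sheaf, so $\omega_X$ is locally free of rank one.

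The one step demanding genuine care — and the main obstacle — is the rigorous identification of the local model, i.e.\ verifying that gluing along $\tau_j$ really produces a product (node)$\,\times\,$(line) rather than a twisted degeneration. The key observation is that $\tau_j$ does not move the fibre coordinate: being induced from the intrinsic identification $(\pi^*{\mathcal E})_{x_j}=\mathcal E_{y_j}=(\pi^*{\mathcal E})_{z_j}$, it is, in suitable local trivialisations of $\mathcal E$, the pullback along $p_0\colon Z\to\overline Y$ of the node-forming identification $x_j\sim z_j$ on $\overline Y$. This reduces the surface computation to the familiar fact that $Y$ has completed local ring $\mathbb{C}[[u,v]]/(uv)$ at each node, and tensoring with the smooth fibre direction yields $\mathbb{C}[[u,v,w]]/(uv)$. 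Once this is in place, (1)--(3) are formal. If desired, the dualizing sheaf can afterwards be made explicit via the conductor formula $p^{*}\omega_X\cong\omega_Z\bigl(\textstyle\sum_{j}(P_{x_j}+P_{z_j})\bigr)$, the two-dimensional analogue of the adjunction $\pi^{*}\omega_Y\cong\omega_{\overline Y}\bigl(\sum_j(x_j+z_j)\bigr)$ for the nodal base.
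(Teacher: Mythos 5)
Your argument is correct and lands on the same underlying principle as the paper's proof --- namely that near $P_j$ the surface $X$ is locally the product of the node of $Y$ with a smooth curve, so everything reduces to the model $\mathbb{C}[[u,v,w]]/(uv)$ --- but the execution differs in two places worth noting. First, the paper never passes through the normalization $p\colon Z\to X$ for this lemma: since $\mathcal E$ is locally free, $X=\mathbb P(\mathcal E)$ is Zariski-locally $U\times\mathbb P^1$ for $U\subset Y$ open, so the local product structure is immediate from the definition. The step you single out as the main obstacle --- verifying that the gluing along $\tau_j$ is untwisted --- simply evaporates if you use the projective-bundle structure directly instead of reconstructing $X$ as a pushout of $Z$ along the conductor. (Your pushout description is nevertheless correct, and it is exactly the mechanism the paper exploits later, in Section 4, via the sequence $0\to\mathcal O_X\to p_*\mathcal O_Z\to\bigoplus_j\mathcal O_{P_j}\to 0$; it is just not needed here.) Second, for the Gorenstein property the paper uses the relative criterion --- $p_1$ is flat with Gorenstein base and Gorenstein fibres, hence $X$ is Gorenstein, citing \cite{Ha1} --- whereas you observe that the local model is a hypersurface in $\mathbb A^3$, hence a local complete intersection; both are valid, yours being more elementary and self-contained, the paper's avoiding any local computation. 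Parts (1) and (3) are handled the same way in both proofs (semi-normality of a product of a semi-normal variety with a non-singular one, and Gorenstein implies $\omega_X$ invertible). The only point where you are terse is the bare assertion that $\mathbb{C}[u,v,w]/(uv)$ is semi-normal; this is standard for reduced rings whose smooth branches meet transversally along a reduced subscheme, but a one-line verification or a reference (the paper cites \cite{GT}) would tighten it.
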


\begin{proof}
(1):\, As the singularities of $Y$ are ordinary nodes, $Y$ is a semi-normal variety. 
Locally, $X$ is a product of a semi-normal variety with a normal (in fact non-singular)
variety and hence $X$ is a semi-normal variety 
\cite[Proof of Corollary 5.9]{GT}.

Since $p^{-1}(Y - \bigcup_{j=1}^g y_j)$ is non-singular, the last assertion in (1) follows.

(2):\, The fibers of $p_1$ are non-singular and hence Gorenstein. The
morphism $p_1$ is flat, the base $Y$ is Gorenstein, and the
fibers of $p_1$ are also Gorenstein. Therefore, it follows that $X$ 
is Gorenstein \cite[Proposition 9.6]{Ha1}.

(3):\, Since $X$ is Gorenstein (by part (2)), the dualizing sheaf $\omega_X$ is a
locally free sheaf of rank $1$ \cite[p. 295--296, Theorem 9.1]{Ha1}.
\end{proof}

The following lemma, which sums up facts about $X$, is an easy but a useful one.

\begin{lemma} \label{l2.2} \mbox{}
\begin{enumerate}
\item The Picard group ${\rm Pic}(X) \,=\, p^*_1 {\rm Pic}(Y) \oplus
{\mathbb Z} {\mathcal O}_{p_1}(1)$.

\item Invariants of $X$: \\
Arithmetic genus $ p_a(X)\,:= \,\chi({\mathcal O}_X) - 1 \,= \,-g$,\\
geometric genus $p_g(X) \,:=\, H^2(X,\, {\mathcal O}_X) \,=\,0$, and \\
irregularity $q(X)\,:=\, H^1(X,\, {\mathcal O}_X) \,=\, g$.

\item $h^0(X,\, \omega_X)\,=\,0$, $h^1(X, \,\omega_X)\,=\,g$ and $h^2(X,\,
\omega_X)\,=\,1$.
\end{enumerate}
\end{lemma}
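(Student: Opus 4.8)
The plan is to handle the three parts in order, since part (2) feeds directly into part (3), and to exploit the $\mathbb{P}^1$-bundle structure $p_1\colon X\to Y$ throughout. For part (1), I would work relatively over $Y$. The morphism $p_1$ is flat with every fibre isomorphic to $\mathbb{P}^1$, so $p_{1*}{\mathcal O}_X\,=\,{\mathcal O}_Y$. This gives injectivity of $p_1^*$ on Picard groups: if $p_1^*L\,\cong\,{\mathcal O}_X$, then by the projection formula $L\,\cong\,L\otimes p_{1*}{\mathcal O}_X\,\cong\,p_{1*}(p_1^*L)\,\cong\,p_{1*}{\mathcal O}_X\,\cong\,{\mathcal O}_Y$. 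Restriction of a line bundle to a fibre gives a homomorphism ${\rm Pic}(X)\to\mathbb{Z}$ (the fibre degree), which is surjective because ${\mathcal O}_{p_1}(1)$ has fibre degree $1$, and this section splits the sequence. The essential point is that any line bundle $M$ of fibre degree $0$ is a pullback: $M$ restricts to ${\mathcal O}_{\mathbb{P}^1}$ on each fibre, so cohomology and base change (valid since $p_1$ is flat and $h^0$ is constant $=1$ on fibres) shows that $p_{1*}M$ is a line bundle $L$ on $Y$ with $p_1^*L\,\xrightarrow{\sim}\,M$. Assembling these yields the claimed splitting ${\rm Pic}(X)\,=\,p_1^*{\rm Pic}(Y)\oplus\mathbb{Z}\,{\mathcal O}_{p_1}(1)$.

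For part (2), I would again use $p_1$. Because the fibres are $\mathbb{P}^1$, we have $p_{1*}{\mathcal O}_X\,=\,{\mathcal O}_Y$ and $R^1p_{1*}{\mathcal O}_X\,=\,0$, so the Leray spectral sequence degenerates and $H^i(X,\,{\mathcal O}_X)\,\cong\,H^i(Y,\,{\mathcal O}_Y)$ for all $i$. Since $Y$ is an integral projective curve, $h^0(Y,\,{\mathcal O}_Y)\,=\,1$; since $Y$ is one-dimensional, $H^2(Y,\,{\mathcal O}_Y)\,=\,0$, whence $p_g(X)\,=\,0$; and by the definition of the arithmetic genus of the nodal curve $Y$, $h^1(Y,\,{\mathcal O}_Y)\,=\,p_a(Y)\,=\,g$, whence $q(X)\,=\,g$. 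Therefore $\chi({\mathcal O}_X)\,=\,1-g$ and $p_a(X)\,=\,\chi({\mathcal O}_X)-1\,=\,-g$.

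For part (3), I would invoke Lemma \ref{l2.1}: $X$ is Gorenstein of dimension $2$, so $\omega_X$ is a line bundle and Serre duality holds in the form $H^i(X,\,\omega_X)\,\cong\,H^{2-i}(X,\,{\mathcal O}_X)^{\vee}$. Substituting the values computed in part (2) gives $h^0(X,\,\omega_X)\,=\,h^2(X,\,{\mathcal O}_X)\,=\,0$, $h^1(X,\,\omega_X)\,=\,h^1(X,\,{\mathcal O}_X)\,=\,g$, and $h^2(X,\,\omega_X)\,=\,h^0(X,\,{\mathcal O}_X)\,=\,1$.

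The only genuinely non-formal step is the pullback argument in part (1): because $Y$ is singular, the standard statement on Picard groups of projective bundles over a regular base cannot be quoted directly, and one must run the cohomology-and-base-change argument by hand — although, as noted, it depends only on flatness of $p_1$ and the fibrewise computation and is therefore unaffected by the nodes of $Y$. Everything else follows formally from the $\mathbb{P}^1$-bundle structure, the known cohomology of a nodal rational curve, and Gorenstein Serre duality.
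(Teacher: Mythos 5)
Your proposal is correct and follows essentially the same route as the paper: the $\mathbb{P}^1$-bundle structure and $p_{1*}\mathcal{O}_X=\mathcal{O}_Y$ give part (1) and the identification $h^i(X,\mathcal{O}_X)=h^i(Y,\mathcal{O}_Y)$ for part (2), and part (3) is Serre duality using the Gorenstein property from Lemma \ref{l2.1}. You simply spell out the cohomology-and-base-change details for the Picard group splitting that the paper leaves implicit.
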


\begin{proof}
Since $p_1\,:\, X \,\longrightarrow\, Y$ is a ${\mathbb P}^1$-bundle, the
first statement follows.

Since $(p_1)_*{\mathcal O}_X \,=\, {\mathcal O}_Y$, we have
$h^i(X,\, {\mathcal O}_X) \,= h^i(Y, \,{\mathcal O}_Y)~\, \forall ~ i$,
hence $\chi({\mathcal O}_X) \,= \,\chi({\mathcal O}_Y)$. Therefore (2) follows.

Statement (3) follows from (2) and Serre duality.
\end{proof}

\begin{remark}\label{r2.3}
For any $y\,\in\, Y$, the fiber $p_1^{-1}(y)$ is isomorphic
to ${\mathbb P}^1$. However, if $y$ is a 
non-singular point, then the fiber is a Cartier divisor, otherwise it is not a Cartier 
divisor. For $y$ non-singular, the Cartier divisor 
$$F_y\,:=\, p_1^{-1}(y)$$
corresponds to the line bundle $p_1^*{\mathcal O}_Y(y)$.
For a node $y\,=\,y_j$, the fiber $F_{y_j}$ is locally defined by two equations. The ideal 
sheaf $I(F_{y_j})$ of $F_{y_j}$ in ${\mathcal O}_X$ is isomorphic to $p_1^* I(y_j)$, 
where $I(y_j)$ denotes the ideal sheaf of $y_j$ in ${\mathcal O}_Y$.
\end{remark}
 
\begin{proposition} \label{p2.4}
The dualizing sheaf $\omega_X$ is isomorphic to
$p_1^*(\omega_Y \otimes \det {\mathcal E}) \otimes {\mathcal O}_{p_1}(-2)$.
\end{proposition}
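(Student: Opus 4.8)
The plan is to route the computation through the relative dualizing sheaf of the projection $p_1\colon X \to Y$. The key observation is that, although $X$ itself is singular, the map $p_1$ is a $\mathbb{P}^1$-bundle and hence a \emph{smooth} morphism of relative dimension one over the whole base $Y$, including the nodes $y_1,\dots,y_g$. Consequently the relative cotangent sheaf $\Omega_{X/Y}$ is a line bundle on all of $X$ and coincides with the relative dualizing sheaf $\omega_{X/Y}$. The strategy is then: (i) compute $\omega_{X/Y}$ from the relative Euler sequence, and (ii) recover $\omega_X$ from the composition formula $\omega_X \cong \omega_{X/Y} \otimes p_1^*\omega_Y$, which is legitimate because $Y$ is Gorenstein.

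For step (i), I would invoke the relative Euler sequence for the projective bundle $p_1\colon \mathbb{P}(\mathcal E) \to Y$ in the form
$$0 \longrightarrow \Omega_{X/Y} \longrightarrow p_1^*{\mathcal E}\otimes {\mathcal O}_{p_1}(-1) \longrightarrow {\mathcal O}_X \longrightarrow 0,$$
which holds over the entire base since ${\mathcal E}$ is locally free and the bundle is Zariski-locally trivial. Taking determinants of the rank-two middle term then yields at once
$$\omega_{X/Y} \,=\, \Omega_{X/Y} \,\cong\, p_1^*\det{\mathcal E}\otimes {\mathcal O}_{p_1}(-2).$$

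The substantive point is step (ii): justifying $\omega_X \cong \omega_{X/Y}\otimes p_1^*\omega_Y$ \emph{over the singular fibers}. One cannot simply check the identity on the smooth locus and extend by codimension, since the non-normal locus $\bigcup_{j=1}^g P_j$ is a union of curves and hence of codimension one in the surface $X$. Instead I would appeal to Grothendieck duality: for the smooth morphism $p_1$ of relative dimension one the twisted inverse image functor satisfies $p_1^{!}(-)\cong \omega_{X/Y}[1]\otimes p_1^*(-)$, and applying this to the composite $X \stackrel{p_1}{\longrightarrow} Y \longrightarrow {\rm Spec}\,{\mathbb C}$ gives the asserted relation of dualizing complexes. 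Here $Y$ is nodal, hence a local complete intersection and in particular Gorenstein, so $\omega_Y$ is a line bundle; and by Lemma \ref{l2.1} the variety $X$ is Gorenstein with $\omega_X$ invertible, so the derived tensor product collapses to an ordinary tensor product of line bundles. Combining the two steps gives
$$\omega_X \,\cong\, \omega_{X/Y}\otimes p_1^*\omega_Y \,\cong\, {\mathcal O}_{p_1}(-2)\otimes p_1^*(\omega_Y\otimes\det{\mathcal E}),$$
as claimed. The main obstacle throughout is precisely the singularity of the base: every statement must be phrased in terms of dualizing sheaves rather than canonical bundles, and the adjunction in step (ii) rests essentially on the Gorenstein property of $Y$ and $X$ (Lemma \ref{l2.1}) rather than on any smoothness of $X$.
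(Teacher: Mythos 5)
Your proof is correct, but it takes a genuinely different route from the paper. The paper pulls everything back to the smooth Hirzebruch surface $Z$ via the finite normalization map $p\colon Z\to X$, uses the conductor formula $p^*\omega_X\cong \omega_Z\otimes{\mathcal O}_Z(\sum_j(P_{x_j}+P_{z_j}))$ together with the classical expression for $\omega_Z$, and thereby identifies $p^*\omega_X$ with the pullback of the expected answer. Since $p^*\colon{\rm Pic}(X)\to{\rm Pic}(Z)$ has a nontrivial kernel (line bundles $p_1^*N$ with $\pi^*N$ trivial), the paper must then spend the second half of its proof eliminating this ambiguity: it computes $R^i(p_1)_*\omega_X$ via the Euler sequence, deduces $h^0(Y,\omega_Y\otimes N)=g$, and concludes by Serre duality and Riemann--Roch that $N\cong{\mathcal O}_Y$. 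Your argument sidesteps both the normalization and the $N$-ambiguity by exploiting that $p_1$ is a smooth morphism even though $X$ is singular: the relative Euler sequence gives $\omega_{X/Y}=\Omega_{X/Y}\cong p_1^*\det{\mathcal E}\otimes{\mathcal O}_{p_1}(-2)$ directly over all of $Y$, and Grothendieck duality for the composite $X\to Y\to{\rm Spec}\,{\mathbb C}$ (using that $Y$ and $X$ are Gorenstein, so all dualizing complexes are shifted line bundles) gives $\omega_X\cong\omega_{X/Y}\otimes p_1^*\omega_Y$ with no correction term to rule out. You are also right that a check on the smooth locus would not suffice, since the non-normal locus $\bigcup_j P_j$ has codimension one. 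The trade-off is that your route leans on the functorial machinery of $f^!$, while the paper's more hands-on computation additionally records the relation between $\omega_X$ and $\omega_Z$ across the normalization, which is of independent geometric interest; but as a proof of the stated isomorphism yours is shorter and complete.
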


\begin{proof}
Since ${\overline Y}$ and $Y$ are Gorenstein curves, and $\pi$ is a finite map, we
conclude that 
$\omega_{{\overline Y}} \,\cong\, \pi^* \omega_Y\otimes {\mathcal C}_{{\overline Y}/Y}$.
As the conductor sheaf ${\mathcal C}_{{\overline Y}/Y}$ is isomorphic to
${\mathcal O}_{\overline Y}(-\sum_j (x_j+z_j))$, we have 
$$\omega_{{\overline Y}} \otimes{\mathcal O}_{\overline Y}(\sum_j (x_j+z_j) )\,\cong
\, \pi^* \omega_Y\, .$$

Similarly, since $X$ and $Z$ are both Gorenstein varieties (see Lemma
\ref{l2.1}), and $p$ is a finite map between them, one has
$$\omega_Z \,\cong \,p^* \omega_X\otimes {\mathcal C}_{Z/X}\, ,$$
where ${\mathcal C}_{Z/X}$ is the conductor sheaf. We have
${\mathcal C}_{Z/X} \,\cong \,p_0^*{\mathcal C}_{{\overline Y}/Y}$. Therefore,
it follows that ${\mathcal C}_{Z/X} \,\cong \,{\mathcal O}_Z (- \sum_j (P_{x_j}+P_{z_j}))$. 
Hence
$$p^* \omega_X \cong \omega_Z \otimes {\mathcal O}_Z(\sum_j (P_{x_j}+P_{z_j}))\, .$$
It is known that $\omega_Z\,\cong\, p_0^*(\omega_{\overline Y} \otimes (\det
\pi^*{\mathcal E})) \otimes {\mathcal O}_{p_0}(-2)$ \cite[Chapter V, 
Lemma 2.10]{Ha}. Hence 
$$
\begin{array}{ccl}
p^*\omega_X & \cong & p_0^*(\omega_{\overline Y} \otimes (\det \pi^*{\mathcal E})
\otimes {\mathcal O}_{\overline Y}(\sum_j(x_j+z_j))) \otimes 
{\mathcal O}_{p_0}(-2)\\
{} & \cong & p_0^*(\pi^*\omega_Y \otimes (\det \pi^*{\mathcal E})) \otimes
{\mathcal O}_{p_0}(-2)\\
{} & \cong & p_0^*(\pi^*(\omega_Y \otimes (\det {\mathcal E}))) \otimes
p^*{\mathcal O}_{p_1}(-2)\\
{} & \cong & p^*(p_1^*(\omega_Y \otimes (\det {\mathcal E}))) \otimes  p^*{\mathcal O}_{p_1}(-2)\\
{} & \cong & p^*(p_1^*(\omega_Y \otimes (\det {\mathcal E})) \otimes {\mathcal O}_{p_1}(-2))\, .
\end{array}
$$ 
Thus $$p^* \omega_X \cong p^*(p_1^*(\omega_Y \otimes (\det {\mathcal E}))\otimes
{\mathcal O}_{p_1}(-2))\, .$$
This, combined with the facts that $${\rm Pic}(X)\, \cong\, p^*_1 {\rm Pic}(Y)
\oplus {\mathbb Z} {\mathcal O}_{p_1}(1)\, , \ {\rm Pic}(Z)\,\cong \,p^*_0 {\rm Pic}(
\overline{Y})
\oplus {\mathbb Z} {\mathcal O}_{p_0}(1)$$ (see Lemma \ref{l2.2}(1)) and 
$p^* {\mathcal O}_{p_1}(1) \,\cong\, {\mathcal O}_{p_0}(1)$, implies 
that 
$$\omega_X \,\cong \,p_1^*(\omega_Y \otimes(\det {\mathcal E}) \otimes N) \otimes
{\mathcal O}_{p_1}(-2) \, ,$$
for some line bundle $N$ on $Y$ whose pull-back to ${\overline Y}$ is trivial.

Taking direct image on $Y$, we have 
$$R^i(p_1)_* \omega_X\,\cong\, (\omega_Y \otimes (\det {\mathcal E}) \otimes N)
\otimes R^i(p_1)_* {\mathcal O}_{p_1}(-2)\, .$$

Since $H^i(\mathbb{P}^1, \mathcal{O}(-2))\,=\,0 \, \ \forall i\,\neq\, 1$, and $H^1(\mathbb{P}^1, \mathcal{O}(-2)) =1$, 
one has $$R^i(p_1)_* {\mathcal O}_{p_1}(-2)\,=\,0 \, \ \forall i \,\neq\, 1$$ and
$R^1(p_1)_* {\mathcal O}_{p_1}(-2)$ is a line bundle. Tensoring the Euler sequence 
$$0 \longrightarrow (det \ p_1^* \mathcal{E}) \otimes {\mathcal O}_{p_1}(-1) \longrightarrow
p_1^* \mathcal{E} \longrightarrow {\mathcal O}_{p_1}(1) \longrightarrow 0$$
with $\mathcal{O}_{p_1}(-1)$ and taking direct image by $p_1$, one gets 
$R^1(p_1)_* {\mathcal O}_{p_1}(-2)\,\cong \, \det {\mathcal E}^*$. Hence we have
$$(p_1)_*\omega_X\,=\,0\, ,\, \ R^2(p_1)_*\omega_X \,=\,0$$ and the only non-vanishing
direct image is $R^1(p_1)_*\omega_X \,\cong\, \omega_Y \otimes N$.
Consequently, we have
$$h^1(X, \,\omega_X)\,=\, h^0(Y,\,R^1p_{1*}\omega_X)
\,=\,h^0(Y,\,\omega_Y \otimes N)\, .$$ Now
Lemma \ref{l2.2}(3)
implies that $h^0(Y,\, \omega_Y \otimes N)\,=\,g$. By Serre duality, 
$h^1(Y,\, N^*)\,=\,g$. So by Riemann--Roch, we have $h^0(Y,\, N^*)\,=\,1$.
Since $d(N^*)\,=\,0$, from $h^0(Y,\, N^*)\,=\,1$ it
follows that $N^*$, and hence $N$, is the
trivial line bundle. This completes the proof of the proposition.
\end{proof}

\section{Irreducibility of the moduli space $M(r,c_1,c_2)$.}

Our goal in this section is to prove that the moduli scheme $M_{X, H}(r, c_1, 
c_2)$ of $H$-stable vector bundles on $X$ of rank $r$ and with fixed Chern classes 
$c_1, c_2$ is irreducible if $H$ satisfies suitable conditions. We closely follow the 
proof in \cite{W} where the irreducibility of $M_{X, H}(r, c_1, c_2)$ is proved under
the assumption that $X$ is a smooth Hirzebruch surface. Hence we mainly explain the line of proof and the 
modifications needed to cover the singular case. Some details are omitted citing appropriate 
references to \cite{W}.

\begin{definition} \label{d3.1}
A coherent sheaf $E$ on $X$ is called prioritary (with respect to $p_1$) if it is
torsionfree and ${\rm Ext}^2(E,\, E(-F_y))\,=\,0$, where $F_y$ denotes the Cartier divisor
defined in Remark \ref{r2.3}. 
\end{definition} 

In the stack of coherent sheaves on $X$, the prioritary sheaves on $X$ are parametrized
by an open substack (by semicontinuity theorem). Let
$$Prior_X(r, c_1, c_2) \,\subset \,Coh_X(r, c_1, c_2)$$
denote the stack of priority sheaves on $X$ of rank $r$ and with fixed Chern classes
$c_1, c_2$. Let 
$$\text{H-SS}_X^{vect}(r, c_1, c_2) \,\subset\, Prior_X(r, c_1, c_2) $$
denote the substack of $H$-semistable prioritary vector bundles on $X$ of rank $r$
and Chern classes $c_1, c_2$.

For convenience (by abuse of notation), we denote $c_1(\omega_X\otimes {\mathcal O}_X(F_y))$ by 
$\omega_X+ F_y$ and $c_1(H)$ by $H$ again. Then the intersection (or cup product), of 
$c_1(\omega_X\otimes {\mathcal O}_X(F_y))$ with $c_1(H)$, evaluated on the fundamental cycle (or cap product with fundamental class) $[X]$ will be denoted by $H \cdot (\omega_X+ F_y)$. With these notations, we have the following lemma.

\begin{lemma}\label{l3.2}
If $H \cdot (\omega_X+ F_y))\,<\, 0$ for a general fiber
$F_y$ (Remark \ref{r2.3}), then any $H$-semistable sheaf $E$ is prioritary. 
\end{lemma}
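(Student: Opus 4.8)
The plan is to reduce the vanishing of $\mathrm{Ext}^2(E,\, E(-F_y))$ to the absence of nonzero endomorphisms twisted by a line bundle of negative $H$-degree, and then to invoke semistability. First I would use Lemma \ref{l2.1}: the surface $X$ is Gorenstein with locally free dualizing sheaf $\omega_X$ of rank one, so Serre duality on $X$ takes the form $\mathrm{Ext}^i(\mathcal{A},\, \mathcal{B}) \cong \mathrm{Ext}^{2-i}(\mathcal{B},\, \mathcal{A} \otimes \omega_X)^*$. Applying this with $\mathcal{A} = E$, $\mathcal{B} = E(-F_y)$ and $i=2$, and moving the twist across using that $\mathcal{O}_X(F_y)$ is invertible (here $F_y$ is the Cartier divisor of Remark \ref{r2.3}, so $y$ is a non-singular point of $Y$), gives
$$\mathrm{Ext}^2(E,\, E(-F_y)) \cong \mathrm{Hom}\bigl(E(-F_y),\, E \otimes \omega_X\bigr)^* \cong \mathrm{Hom}(E,\, E \otimes L)^*, \qquad L := \omega_X \otimes \mathcal{O}_X(F_y).$$
Thus it suffices to prove $\mathrm{Hom}(E,\, E \otimes L) = 0$.

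Next I would record that the hypothesis $H \cdot (\omega_X + F_y) < 0$ is exactly the statement $H \cdot c_1(L) < 0$, that is, $L$ has negative degree with respect to the polarization $H$. Tensoring by the line bundle $L$ shifts the slope of every coherent subsheaf by the same amount $H \cdot c_1(L)$; hence $E \otimes L$ is again $H$-semistable, with $\mu_H(E \otimes L) = \mu_H(E) + H \cdot c_1(L) < \mu_H(E)$.

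Then I would invoke the standard vanishing: there is no nonzero homomorphism from an $H$-semistable torsionfree sheaf to an $H$-semistable torsionfree sheaf of strictly smaller slope. Concretely, given $\phi : E \to E \otimes L$, its image $G$ is a subsheaf of the torsionfree sheaf $E \otimes L$, hence torsionfree, and simultaneously a quotient of $E$; semistability of $E$ forces $\mu_H(G) \geq \mu_H(E)$, while semistability of $E \otimes L$ forces $\mu_H(G) \leq \mu_H(E \otimes L) < \mu_H(E)$, a contradiction unless $G = 0$. Therefore $\mathrm{Hom}(E,\, E \otimes L) = 0$, and by the displayed Serre-duality isomorphism $\mathrm{Ext}^2(E,\, E(-F_y)) = 0$; since $E$ is torsionfree, this is precisely the statement that $E$ is prioritary.

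The only genuinely delicate point is the Serre-duality step on the singular surface $X$: one must ensure duality holds in the $\mathrm{Ext}$ form used above, which is guaranteed precisely because $X$ is Gorenstein with $\omega_X$ locally free (Lemma \ref{l2.1}). One should also verify that $L$ is an honest line bundle, which is why $F_y$ is taken to be a general (hence Cartier) fiber, so that $\mathcal{O}_X(F_y)$ is invertible and the slope shift is clean; over a node the fiber is not Cartier (Remark \ref{r2.3}) and the argument would not apply directly. With these two points secured, the semistability argument is routine.
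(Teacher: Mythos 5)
Your proposal is correct and follows essentially the same route as the paper: Serre duality (valid since $X$ is Gorenstein with invertible $\omega_X$) converts the desired vanishing of $\mathrm{Ext}^2(E,\,E(-F_y))$ into the vanishing of $\mathrm{Hom}(E,\,E\otimes\omega_X\otimes\mathcal{O}_X(F_y))$, and the image of any such homomorphism is squeezed between the slopes of $E$ and its twist, which differ by $H\cdot(\omega_X+F_y)<0$. The paper phrases this as a contradiction and, like you, notes that the slope-shift identity requires $F_y$ to be a Cartier (general) fiber; your writeup adds nothing essentially new but is a correct direct version of the same argument.
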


\begin{proof}
Suppose that ${\rm Ext}^2(E, E(-F_y))\,\neq \,0$ for some non-singular point $y\,\in\,
Y$. Then by Serre duality, there exists a non-zero element
$\phi \,\in \, {\rm Hom} (E,\, E(\omega_X+F_y))$. Let $Im (\phi)$ denote the image of
the homomorphism $\phi$. By the $H$-semistability of $E$ and $E(\omega_X+F_y)$, we get
\begin{equation}\label{mu}
\mu_H(E) \le \mu_H(Im (\phi)) \,\le\, \mu_H(E(\omega_X+F_y))
\,=\, \mu_H(E)+H\cdot (\omega_X+F_y)\, .
\end{equation}
Since $H\cdot (\omega_X+ F_y)\,<\, 0$, this gives a contradiction. Thus $E$ is prioritary.

We note that the last equality in \eqref{mu} may not hold for a singular point $y$, hence
the proof fails if $F_y$ is not a general fiber. 
\end{proof}

\begin{lemma}\label{l3.3}
The stack $\text{\rm H-SS}_X^{vect}(r, c_1, c_2)$ is smooth.
\end{lemma}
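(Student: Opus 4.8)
The plan is to show smoothness of the stack $\text{H-SS}_X^{vect}(r,c_1,c_2)$ by verifying that it has unobstructed deformations at every point, i.e. that the obstruction space vanishes for each $H$-semistable prioritary vector bundle $E$. The deformation theory of a vector bundle (equivalently, a coherent sheaf locally free of rank $r$) $E$ on a projective variety $X$ is governed by the groups $\mathrm{Ext}^i(E,E)$: infinitesimal deformations are parametrized by $\mathrm{Ext}^1(E,E)$, and obstructions to lifting them lie in $\mathrm{Ext}^2(E,E)$. Since every $H$-semistable sheaf under our hypotheses is prioritary (Lemma \ref{l3.2} combined with the standing assumption $H\cdot(\omega_X+F_y)<0$), the relevant objects $E$ satisfy $\mathrm{Ext}^2(E,E(-F_y))=0$ by Definition \ref{d3.1}. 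First I would reduce the smoothness of the stack to the single cohomological vanishing $\mathrm{Ext}^2(E,E)=0$ for all such $E$, following the standard deformation-theoretic criterion used in \cite{W}.

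The key step is to deduce $\mathrm{Ext}^2(E,E)=0$ from the prioritary condition $\mathrm{Ext}^2(E,E(-F_y))=0$. To bridge the gap between $E(-F_y)$ and $E$, I would use the short exact sequence of sheaves on $X$ obtained from the ideal sheaf of a general fiber,
\begin{equation}\label{prop-seq}
0\,\longrightarrow\, E(-F_y)\,\longrightarrow\, E\,\longrightarrow\, E\vert_{F_y}\,\longrightarrow\, 0\, ,
\end{equation}
valid because for a non-singular point $y$ the fiber $F_y$ is a Cartier divisor (Remark \ref{r2.3}) with $\mathcal{O}_X(-F_y)\cong p_1^*\mathcal{O}_Y(-y)$. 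Applying $\mathrm{Hom}(E,-)$ to \eqref{prop-seq} produces a long exact sequence in which $\mathrm{Ext}^2(E,E)$ is squeezed between $\mathrm{Ext}^2(E,E(-F_y))=0$ and $\mathrm{Ext}^2(E,E\vert_{F_y})$. Thus it suffices to show $\mathrm{Ext}^2(E,E\vert_{F_y})=0$. Since $E\vert_{F_y}$ is supported on the fiber $F_y\cong\mathbb{P}^1$, which is a curve, this higher Ext group vanishes for dimension reasons: restricting to the fiber and computing the relevant $\mathrm{Ext}$ on a one-dimensional scheme forces the degree-two term to be zero.

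The main obstacle will be handling the distinction between non-singular and singular points of the base $Y$, which is exactly where the singular geometry of $X$ intrudes. The prioritary condition in Definition \ref{d3.1} is phrased using a general fiber $F_y$, and the exact sequence \eqref{prop-seq} relies on $F_y$ being a genuine Cartier divisor; over the nodes $y_j$ the fibers $P_j$ fail to be Cartier (Remark \ref{r2.3}), so one must take care that the general point $y$ used in the argument avoids the non-normal locus $\bigcup_j P_j$ identified in Lemma \ref{l2.1}(1). I would verify that a general fiber indeed lies over a smooth point of $Y$, so that \eqref{prop-seq} is available, and then confirm that the Ext-group computation on $F_y\cong\mathbb{P}^1$ goes through unchanged. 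The rest of the argument—the identification of tangent and obstruction spaces with $\mathrm{Ext}^1$ and $\mathrm{Ext}^2$, and the conclusion that vanishing obstructions yield a smooth stack—parallels the smooth-surface case treated in \cite{W}, so I expect only the bookkeeping around the singular fibers to require genuinely new attention.
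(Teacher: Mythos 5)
Your proposal is correct and follows essentially the same route as the paper: reduce smoothness to the vanishing of the obstruction space $\mathrm{Ext}^2(E,E)$, apply $\mathrm{Hom}(E,-)$ to the sequence $0\to E(-F_y)\to E\to E\vert_{F_y}\to 0$ for a fiber over a non-singular point $y$, and kill the two flanking terms using the prioritary condition and the fact that $E\vert_{F_y}$ is supported on the one-dimensional $F_y\cong\mathbb{P}^1$. The paper likewise chooses $y$ non-singular so that $F_y$ is Cartier, exactly the point you flag as needing care.
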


\begin{proof}
It suffices to show that ${\rm Ext}^2(E,E)\,=\, 0$ for an $H$-semistable vector bundle
$E$. Let $y$ be a non-singular point of $Y$. One has a short exact sequence 
$$0\,\longrightarrow \,E(-F_y)\,\longrightarrow \,E\,\longrightarrow\, E\vert_{F_y}
\,\longrightarrow\, 0\, .$$
Applying Hom$(E, - )$ to this exact sequence yields
$$\longrightarrow \, {\rm Ext}^2(E, E(-F_y)) \,\longrightarrow \,
 {\rm Ext}^2(E,E)\,\longrightarrow \, {\rm Ext}^2(E, E\vert_{F_y})\,\longrightarrow \, .$$
We have $${\rm Ext}^2(E,\, E\vert_{F_y})\,\cong\, H^2(F_y, \,E^*\otimes E\vert_{F_y})
\,=\,0$$ as $F_y \,\cong\, {\mathbb P}^1$, and hence ${\rm
Ext}^2(E,\,E\vert_{F_y}) \,=\,0$. Since $E$ is prioritary, ${\rm Ext}^2(E,\,
 E(-F_y))\,=\,0$. It follows that ${\rm Ext}^2(E,\, E)\,=\,0$. 
\end{proof}

\begin{lemma}\label{l3.4}
Let $\sigma$ be a section of $p_1\,: \,X \,\longrightarrow\, Y$. Let $E$ be a coherent
sheaf on $X$ such that $(p_1)_* (E(-\sigma))\,=\, 0 \,=\, 
R^1(p_1)_* E$. Then there is an exact sequence 
$$0 \,\longrightarrow \, p_1^* (p_{1*}E)\,\longrightarrow \, E \,\longrightarrow \,
p_1^*(R^1p_{1*}(E(-\sigma))\otimes \Omega_{X/Y} (\sigma) 
\,\longrightarrow \, 0 \, .$$
\end{lemma}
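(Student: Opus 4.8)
The plan is to exhibit the asserted sequence as the relative analogue, along the $\mathbb{P}^1$-bundle $p_1$, of the Beilinson resolution of a coherent sheaf on $\mathbb{P}^1$, with the left-hand arrow given by the adjunction counit $\epsilon\colon p_1^*((p_1)_*E)\to E$. First I would record the fibrewise geometry. Since $p_1$ is a Zariski-locally trivial $\mathbb{P}^1$-bundle, it is smooth and flat of relative dimension one, so $\Omega_{X/Y}$ is a line bundle and $\omega_{X/Y}\cong\Omega_{X/Y}$; on a fibre $F_y\cong\mathbb{P}^1$ one has $\mathcal{O}_X(-\sigma)\vert_{F_y}\cong\mathcal{O}(-1)$, hence $\Omega_{X/Y}(\sigma)\vert_{F_y}\cong\mathcal{O}(-2)\otimes\mathcal{O}(1)\cong\mathcal{O}(-1)$. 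The two hypotheses $(p_1)_*(E(-\sigma))=0$ and $R^1(p_1)_*E=0$ translate, fibrewise, into $H^0(F_y,\,E\vert_{F_y}(-1))=0$ and $H^1(F_y,\,E\vert_{F_y})=0$; for $E\vert_{F_y}\cong\bigoplus_i\mathcal{O}(a_i)$ this forces every $a_i\in\{0,-1\}$, so each fibre is the sum of an $\mathcal{O}$-part and an $\mathcal{O}(-1)$-part.

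On $F_y$ the evaluation map $H^0(E\vert_{F_y})\otimes\mathcal{O}_{F_y}\to E\vert_{F_y}$ is exactly the inclusion of the $\mathcal{O}$-summands, so it is injective with cokernel the $\mathcal{O}(-1)$-part. Using that $E$ is flat over $Y$ (automatic when $E$ is a vector bundle, since $\mathcal{O}_X$ is flat over $\mathcal{O}_Y$), $R^1(p_1)_*E=0$ makes $(p_1)_*E$ locally free and compatible with base change, so $\epsilon$ is fibrewise this evaluation map; I would conclude that $\epsilon$ is injective and that its cokernel $Q$ is locally free with $Q\vert_{F_y}\cong\mathcal{O}(-1)^{\oplus b}$. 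Consequently $W:=Q\otimes\Omega_{X/Y}(\sigma)^{-1}$ is fibrewise trivial, whence its counit $p_1^*((p_1)_*W)\to W$ is an isomorphism and $Q\cong p_1^*((p_1)_*W)\otimes\Omega_{X/Y}(\sigma)$. At this stage there is already a short exact sequence $0\to p_1^*((p_1)_*E)\to E\to p_1^*((p_1)_*W)\otimes\Omega_{X/Y}(\sigma)\to 0$ of the desired shape, and only the bundle $(p_1)_*W$ on $Y$ needs to be named.

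To identify $(p_1)_*W$ with $R^1(p_1)_*(E(-\sigma))$, I would twist $0\to p_1^*((p_1)_*E)\to E\to Q\to 0$ by $\mathcal{O}_X(-\sigma)$ and apply $R(p_1)_*$. By the projection formula and $R(p_1)_*\mathcal{O}_X(-\sigma)=0$ the subsheaf contributes nothing, giving $R^1(p_1)_*(E(-\sigma))\cong R^1(p_1)_*(Q(-\sigma))$. Since $Q$ is locally free and $p_1$ is smooth, relative Serre duality yields $R^1(p_1)_*(Q(-\sigma))\cong\big((p_1)_*(W^\vee)\big)^\vee$, because $Q^\vee\otimes\Omega_{X/Y}(\sigma)=W^\vee$; and as $W=p_1^*((p_1)_*W)$ with $(p_1)_*\mathcal{O}_X=\mathcal{O}_Y$, the projection formula collapses the right-hand side to $(p_1)_*W$. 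Substituting back produces exactly $0\to p_1^*((p_1)_*E)\to E\to p_1^*(R^1(p_1)_*(E(-\sigma)))\otimes\Omega_{X/Y}(\sigma)\to 0$.

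The main obstacle is the bookkeeping of twists in this last identification, together with the legitimacy of the fibrewise reductions. The smoothness of $p_1$ is what rescues both: it makes $\Omega_{X/Y}$ a genuine line bundle and validates relative duality and flat base change, so the singularity of the base $Y$ never intervenes. For a general coherent $E$ one must be careful, since $E$ need not be flat over $Y$; the uniform way to remove any flatness assumption is to replace the fibrewise splitting by the relative diagonal resolution $0\to q_1^*\mathcal{O}_X(-\sigma)\otimes q_2^*\Omega_{X/Y}(\sigma)\to\mathcal{O}_{X\times_YX}\to\mathcal{O}_\Delta\to 0$ on $X\times_YX$, which restricts on each fibre to the Koszul resolution of the diagonal of $\mathbb{P}^1\times\mathbb{P}^1$. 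Tensoring by $q_2^*E$ and pushing forward along $q_1$, flat base change along $p_1$ converts the two hypotheses into the vanishing of the degree-$0$ term and the required identification of the degree-$1$ term, yielding the same two-term resolution with no hypothesis on $E$ beyond coherence.
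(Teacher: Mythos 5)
Your main line of argument (the first three paragraphs) contains a step that fails: the claim that $(p_1)_*(E(-\sigma))=0$ together with $R^1(p_1)_*E=0$ forces $E\vert_{F_y}\cong\mathcal{O}^{\oplus a}\oplus\mathcal{O}(-1)^{\oplus b}$ on \emph{every} fibre. Vanishing of a direct image only controls $h^0$ on the generic fibre; it does not preclude jumps on special fibres. Concretely, take a rank two bundle $E$ on $\mathbb{P}^1\times\mathbb{P}^1$ given by a Serre extension $0\to\mathcal{O}\to E\to I_p\to 0$: then $E\vert_{F_y}\cong\mathcal{O}^{2}$ for all fibres of $p_1$ except the one through $p$, where $E\vert_{F_0}\cong\mathcal{O}(1)\oplus\mathcal{O}(-1)$. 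Both hypotheses of the lemma hold ($h^1(E\vert_{F_y})=0$ everywhere, and $(p_1)_*(E(-\sigma))$ is torsion-free and generically zero, hence zero), yet $H^0(F_0,E\vert_{F_0}(-1))\neq 0$, the cokernel $Q$ of the counit is a torsion sheaf supported on $F_0$, and $R^1(p_1)_*(E(-\sigma))$ is a skyscraper. So $Q$ is not locally free, $W$ is not fibrewise trivial, and the identification of $Q$ via the counit of $W$ and relative Serre duality for locally free sheaves collapses. This is not a marginal case for the paper: in Proposition \ref{p3.6} the lemma is applied when $d=0$, where $L=R^1p_{1*}(E(-\sigma))$ is explicitly a skyscraper sheaf and $E\vert_{F_y}$ acquires an $\mathcal{O}(1)$ summand on finitely many fibres, i.e.\ exactly the situation your fibrewise reduction excludes.

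The good news is that your closing paragraph already contains the correct proof, and it is precisely the one the paper invokes: Lemma \ref{l3.4} is \cite[Lemma 8]{W}, whose proof pushes forward the twist by $q_2^*E$ of the relative Koszul resolution of the diagonal in $X\times_Y X$ (the relative Beilinson resolution), and the paper's only added content is the observation, via \cite[Remark 3]{Bei}, that this resolution exists over an arbitrary (in particular nodal) base $Y$ because $p_1$ is a smooth $\mathbb{P}^1$-bundle. That argument needs no flatness of $E$ over $Y$, no fibrewise splitting, and no local freeness of the cokernel; the two hypotheses enter only to kill the unwanted terms of the resulting two-row spectral sequence. You should promote that paragraph to be the proof and discard the fibrewise argument, or at least restrict the latter to an illustrative special case; as written, the "main obstacle" is not merely bookkeeping of twists or flatness, but the false fibrewise translation of the hypothesis $(p_1)_*(E(-\sigma))=0$.
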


\begin{proof}
This is essentially \cite[Lemma 8]{W}. The proof goes through in the singular case as 
\cite[Remark 3]{Bei} implies that the Beilinson resolution exists over any base $Y$.
\end{proof}

\begin{proposition}\label{p3.6}
The stack $Prior_X(r, c_1, c_2)$ of prioritary sheaves is irreducible. 
\end{proposition}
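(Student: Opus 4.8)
The plan is to follow Walter's strategy for the smooth Hirzebruch case \cite{W}, in which the Beilinson-type resolution of Lemma \ref{l3.4} reduces the irreducibility of the prioritary stack to a statement about coherent sheaves on the base curve. The only genuinely new feature is that the base $Y$ is now a nodal rational curve rather than $\mathbb{P}^1$, and I would isolate this difficulty and resolve it through the normalization $\pi\,:\,\overline{Y}\,\longrightarrow\, Y$.

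First I would fix a section $\sigma$ of $p_1$ as in Lemma \ref{l3.4} and show that the substack $U\,\subset\, Prior_X(r,c_1,c_2)$ of prioritary sheaves $E$ in standard position, meaning $(p_1)_*(E(-\sigma))\,=\,0\,=\,R^1(p_1)_* E$, is open and dense. Openness is semicontinuity. For density one argues, as in \cite{W}, that the prioritary condition forces the restriction of $E$ to a general fiber to have balanced splitting type on a dense open substack, where (after the appropriate normalization of the twist by a line bundle pulled back from $Y$) the two vanishing conditions hold; here one uses that a general fiber $F_y$ is a Cartier divisor (Remark \ref{r2.3}), so the section $\sigma$ and the relevant twists can be chosen away from the nodes $y_1,\ldots,y_g$.

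For $E\,\in\, U$, Lemma \ref{l3.4} gives a short exact sequence
$$0 \,\longrightarrow\, p_1^* B \,\longrightarrow\, E \,\longrightarrow\, p_1^* A \otimes \Omega_{X/Y}(\sigma) \,\longrightarrow\, 0\, ,$$
with $B\,=\,(p_1)_* E$ and $A\,=\,R^1(p_1)_*(E(-\sigma))$ coherent sheaves on $Y$ whose ranks and degrees are determined by $(r,c_1,c_2)$, and which are locally free on a dense open substack. Thus $U$ is dominated by the stack of triples $(A,B,e)$ with $e\,\in\, {\rm Ext}^1_X(p_1^* A \otimes \Omega_{X/Y}(\sigma),\, p_1^* B)$. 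By the projection formula and the Leray spectral sequence for $p_1$, the dimension of this ${\rm Ext}$ group is constant on a dense open set of pairs $(A,B)$, so the forgetful map to the stack of pairs is an affine fibration with irreducible (vector-space) fibers. Consequently, irreducibility of $U$, and hence of $Prior_X(r,c_1,c_2)$, follows once the stack of pairs $(A,B)$ is shown to be irreducible.

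This last point is where the nodal base enters, and it is the main obstacle, since on $Y$ vector bundles are no longer classified by a splitting type. It suffices to prove that the stack of vector bundles of fixed rank $n$ and degree $d$ on $Y$ is irreducible. For this I would use that a vector bundle on $Y$ is the same datum as a vector bundle $\overline{V}$ on $\overline{Y}\,=\,\mathbb{P}^1$ together with gluing isomorphisms $\varphi_j\,:\,\overline{V}_{x_j}\stackrel{\sim}{\longrightarrow}\overline{V}_{z_j}$ at the $g$ pairs of points over the nodes. This exhibits the stack of bundles on $Y$ as fibered over the stack of bundles on $\mathbb{P}^1$, with fiber over $\overline{V}$ a torsor under $\prod_{j=1}^g {\rm GL}_n$, hence irreducible of constant dimension $gn^2$. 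Since Grothendieck's theorem makes the balanced splitting type a dense open substack, the base stack over $\mathbb{P}^1$ is irreducible, and a fibration with irreducible base and irreducible equidimensional fibers is irreducible; therefore the stack of bundles on $Y$ is irreducible, and so is the locus of pairs $(A,B)$. The one technical care needed is that $A$ and $B$ may degenerate to non-locally-free torsionfree sheaves over the nodes for special $E$; these occur only in lower-dimensional strata and do not affect the irreducibility of the total stack.
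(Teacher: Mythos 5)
Your overall strategy coincides with the paper's (which follows Walter \cite{W}): use the resolution of Lemma \ref{l3.4} to write a general prioritary sheaf as an extension of $p_1^*A\otimes\Omega_{X/Y}(\sigma)$ by $p_1^*B$, and reduce irreducibility of the prioritary stack to irreducibility of the stack of sheaves of fixed rank and degree on the nodal base $Y$. The one genuine difference is at that last step: the paper simply invokes Rego \cite{Re} for the irreducibility of the stack of vector bundles on a nodal curve, whereas you prove it directly by describing bundles on $Y$ as bundles on $\overline{Y}=\mathbb{P}^1$ together with gluing isomorphisms at the preimages of the nodes, so that the stack fibers over the irreducible stack of bundles on $\mathbb{P}^1$ with fibers torsors under $\prod_j {\rm GL}_n$. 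That argument is correct and self-contained (it is essentially the proof of the fact being cited). The paper also justifies the passage to pairs differently: it uses the Drezet--Le Potier spectral sequences to show that ${\rm Ext}^1(E,E)$ surjects onto ${\rm Ext}^1(B,B)\oplus{\rm Ext}^1(A,A)$, hence that the map to $Coh_Y\times Coh_Y$ is dominant with irreducible fibers; your ``stack of triples $(A,B,e)$'' formulation is an acceptable substitute provided you control the jumping of the ${\rm Ext}^1$ groups, which you do on a dense open set.

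There is, however, one point where your reduction is genuinely incomplete: the case $d:=-c_1\cdot F_y\equiv 0 \ ({\rm mod}\ r)$. After normalizing the twist so that $0\le d<r$, the sheaf $A=R^1(p_1)_*(E(-\sigma))$ has rank $d$; when $d=0$ it is \emph{generically} a skyscraper sheaf of some positive length, not a vector bundle, so this is not a lower-dimensional degeneration but the typical behaviour. Hence ``it suffices to prove that the stack of vector bundles of fixed rank and degree on $Y$ is irreducible'' is not quite enough: you also need irreducibility of the stack of torsion sheaves of fixed length on $Y$ (true, but a separate statement, which the paper sidesteps by treating $d=0$ and $d>0$ as distinct cases). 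Relatedly, even for $d>0$ the sheaves $A$ and $B$ attached to special $E$ may fail to be locally free at the nodes, and your dismissal of these loci tacitly uses that the locally free locus is dense in the full stack of coherent sheaves of that rank and degree on $Y$, i.e.\ that every such sheaf is a flat limit of vector bundles; the paper states this explicitly, and it is exactly what makes the stratification argument close up. Both points are standard and fixable, but as written your proof does not cover them.
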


\begin{proof}
We briefly sketch a proof (see \cite[Proposition 2]{W} for details). Fix a section 
$\sigma \,\subset\, X$. Let $d\,=\, - c_1.F_y$. We may assume that $0 \,\le\, d\,<
\,r$ (by twisting $E$ with a power of ${\mathcal O}_X(\sigma)$). If $d\,>\,0$, we may
restrict ourselves to the dense open substack $Prior^0\,
\subset\, Prior_X(r, c_1, c_2)$ parametrizing all $E$ such 
that $E\vert_{F_y}\,\cong\, {\mathcal O}^{r-d}_{F_y} \oplus ({\mathcal O}_{F_y}(-1))^d$ 
(since the complement forms a closed substack of codimension at least one). If $d\,=\,0$, 
we may restrict ourselves to the dense open substack $Prior^0\,\subset\, Prior_X(r, c_1, 
c_2)$ parametrizing all $E$ such that $E\vert_{F_y} \,\cong\, {\mathcal O}^{r}_{F_y}$
for all but finitely many $y \,\in\, Y$ and $E\vert_{F_y} \,\cong\,
{\mathcal O}_{F_y}(1) \oplus {\mathcal O}^{r-2}_{F_y} \oplus ({\mathcal O}_{F_y}(-1))$
at these finitely many points.

In either case, one sees that $K\,=\, p_{1*} E$ is a vector bundle on $Y$ of rank $r-d$ 
and degree $k\,=\, \chi(E) +(r-d)(g-1)$. Also $L\,=\, R^1p_{1*} (E( - \sigma))$ is a sheaf of 
rank $d$, degree ${\ell}\,=\, - \chi(E) +(c_1.\sigma) - (r-d)(g-1)$. Moreover, $L$ is 
locally free for for $d\,>\,0$ and a skyscraper sheaf for $d\,=\,0$.

By Lemma \ref{l3.4}, there exists a short exact sequence
$$0\,\longrightarrow \,p_1^*K\,\longrightarrow \, E \,\longrightarrow \,
p_1^*L \otimes \Omega_{X/Y} (\sigma) \,\longrightarrow \, 0\, .$$
By \cite[p. 200]{LeP}, if $E$ has a filtration 
$${\mathcal F} \,: \ 0 \,=\,F_0 \,\subset\, F_1\,
\subset\, \cdots \,\subset\, F_t\,=\, E\, , ~\ gr_i(E)\,:=\, (E_i/E_{i-1})\, , $$
one defines groups ${\rm Ext}^i_{+} (E,\,E)$ and ${\rm Ext}^i_{-} (E,\,E)$ such that there
is an exact sequence 
$$
\cdots \,\longrightarrow \, {\rm Ext}^i_{-} (E,\,E)\,\longrightarrow \,{\rm Ext}^i (E,\,E)
\,\longrightarrow \, {\rm Ext}^i_{+} (E,\,E)\,\longrightarrow \, \cdots \, .$$
Then there is a spectral sequence for which
$$E_1^{p,q}\,=\, \prod_i \ {\rm Ext}^{p+q} (gr_i(E), \,gr_{i-p}(E)) ~ \ {\rm if} \ 
p\,<\,0;~ E_1^{p,q}\,=\,0 ~ \ {\rm for} \ p\,\ge\, 0\, ,$$ 
and which converges to ${\rm Ext}^{\bullet}_{+} (E,\,E)$.

In our case, for ${\mathcal F} \,:\, 0 \,\subset\,\pi^*K\,\subset\, E$, we have 
$${\rm Ext}^i_{+} (E,\,E)\,=\, H^i(p_1^*(K^*\otimes L) \otimes \Omega_{X/Y}(\sigma)) 
\,=\, H^i(K^*\otimes L\otimes p_{1*}{\mathcal O}_{{p_1}} (-1)) \,=\,0 \, ~\forall
~ i \, .$$
Hence ${\rm Ext}^i_{-} (E,\,E)\,= \,{\rm Ext}^i (E,\,E)$ for all $i$, so that infinitesimal
deformations of $E$ are same as those of $0\,\subset\, \pi^*K\,\subset\, E$.

By \cite[Remark on p. 201]{LeP}, there is a spectral sequence with 
$$E_1^{p,q}\,=\, \prod_i \, {\rm Ext}^{p+q} (gr_i(E),\, gr_{i-p}(E)) \ {\rm if} \ p
\,\ge\, 0;~ E_1^{p,q}\,=\,0 \ {\rm for} \ p \,<\, 0\, ,$$ 
and which converges to ${\rm Ext}^{\bullet}_{-} (E,\,E)$.

Since $${\rm Ext}^2 (p_1^* L \otimes \Omega_{X/Y} (\sigma), \, p_1^*K)\,=\, 0\, ,$$ 
straightforward computations show that
$${\rm Ext}^1 (E,\,E)\,=\, {\rm Ext}^1_{-} (E,\,E)$$ surjects onto ${\rm Ext}^1 (K,\,K)
\oplus {\rm Ext}^1 (L,\,L)$. Hence a general infinitesimal deformation of $E$ induces a
general infinitesimal deformation of $K$ and $L$, and the morphism 
$$\phi\,:\, Prior^0\,\longrightarrow \, Coh_Y(r-d,k) \times Coh_Y(d,{\ell})\, ,
~ \ [E] \,\longmapsto\, ([K],\, [L]) $$
is dominant. Since the stack of vector bundles of fixed rank and degree on a nodal curve
is irreducible, \cite{Re}, and every coherent sheaf is in the limit of vector bundles, the
stack of coherent sheaves of fixed rank and degree on a nodal curve is irreducible. Hence
the image of $\phi$ is irreducible. The fibers of $\phi$ are stack quotients of an affine
subscheme of the affine space ${\rm Ext}^1 (p_1^* L \otimes \Omega_{X/Y} (\sigma),\,
p_1^*K)$, hence they are irreducible. It follows that $Prior_X(r, c_1, c_2)$ is irreducible. 
\end{proof}

Lemma \ref{l3.3} and Proposition \ref{p3.6} together give the following:

\begin{corollary}\label{c3.7}
The substack $\text{\rm H-SS}^{vect}_{ X}(r,c_1,c_2)$ is a smooth irreducible open
substack of the stack $Prior_X(r, c_1, c_2)$.
\end{corollary}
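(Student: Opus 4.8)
The plan is to verify the three assertions in the statement — openness in $Prior_X(r,c_1,c_2)$, smoothness, and irreducibility — in turn, the latter two following almost immediately from the results already established.

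First I would address openness, which is the only point not covered verbatim by the cited lemma and proposition. The substack $\text{H-SS}^{vect}_X(r,c_1,c_2)$ is singled out inside $Prior_X(r,c_1,c_2)$ by two extra conditions imposed on a flat family $E$: that each member be locally free, and that each member be $H$-semistable. Both conditions are open on the parameter stack. Local freeness is open because, for a coherent sheaf flat over the base, the locus of base points over which the restriction to $X$ fails to be locally free is closed (it is governed by the Fitting ideals of the sheaf), so its complement is open. $H$-semistability is open by the standard openness of slope semistability in flat families of torsion-free sheaves on a projective variety. Intersecting two open substacks of $Prior_X(r,c_1,c_2)$ again yields an open substack, which gives the openness claim.

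Next, smoothness is precisely the content of Lemma \ref{l3.3}, which establishes $\text{Ext}^2(E,E)\,=\,0$ for every $H$-semistable vector bundle $E$ on $X$. Since $\text{Ext}^2(E,E)$ is the obstruction space controlling deformations of $E$ in the stack of coherent sheaves, its vanishing shows that deformations are unobstructed, and hence that $\text{H-SS}^{vect}_X(r,c_1,c_2)$ is smooth at every point. Irreducibility then follows formally: by Proposition \ref{p3.6} the ambient stack $Prior_X(r,c_1,c_2)$ is irreducible, and a nonempty open substack of an irreducible stack is irreducible; combined with the openness established above, this completes the proof.

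The only genuine point requiring care — and the one I would present in most detail — is the openness of the local-freeness and $H$-semistability conditions on the \emph{singular} surface $X$. Both are classical, but because $X$ is non-normal along $\bigcup_{j=1}^g P_j$ one must check that the usual openness theorems apply in this setting; I expect this to be the main (though minor) obstacle, and would handle it by citing the standard openness results for semistability together with the Fitting-ideal description of the non-locally-free locus, noting that neither argument uses smoothness of $X$.
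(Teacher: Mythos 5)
Your proposal is correct and follows essentially the same route as the paper, which derives the corollary directly by combining Lemma \ref{l3.3} (smoothness) with Proposition \ref{p3.6} (irreducibility of the ambient stack $Prior_X(r,c_1,c_2)$), with openness of the local-freeness and semistability conditions taken as standard. Your added discussion of why openness survives the non-normality of $X$ is a reasonable elaboration of a point the paper leaves implicit, but it does not change the argument.
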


\begin{theorem} \label{thm1}
The moduli space $M_{X,H}(r,c_1,c_2)$ of $H$-semistable vector bundles on $X$ is a
normal irreducible variety. Its open subscheme corresponding to stable vector bundles is
a smooth variety. 
\end{theorem}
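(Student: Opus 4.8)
The plan is to transfer the smoothness and irreducibility of the moduli stack, established in Corollary~\ref{c3.7}, to the coarse moduli space through the standard good-quotient (Geometric Invariant Theory) construction. The first point is that, under the polarization hypothesis $H\cdot(\omega_X+F_y)<0$ of Lemma~\ref{l3.2}, every $H$-semistable vector bundle is prioritary, so the substack $\text{\rm H-SS}^{vect}_X(r,c_1,c_2)$ of Corollary~\ref{c3.7} is exactly the full stack of $H$-semistable vector bundles, and is therefore smooth and irreducible. Since $X$ is a projective Gorenstein surface (Lemma~\ref{l2.1}), the moduli space is constructed, following \cite{W}, as a good quotient: boundedness lets one fix a single $m\gg 0$ for which every such $E$ has $E(m)$ globally generated with $H^{>0}(X,\,E(m))=0$, so that, with $V=H^0(X,\,E(m))$ of fixed dimension, each $E$ is a quotient $V\otimes{\mathcal O}_X(-m)\twoheadrightarrow E$ and hence a point of a Quot scheme. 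Let $R$ denote the locally closed subscheme parametrizing the $H$-semistable vector-bundle quotients. Then $M_{X,H}(r,c_1,c_2)=R/\!\!/\mathrm{GL}(V)$ is the good quotient and the moduli stack is $[R/\mathrm{GL}(V)]$; since smoothness and irreducibility of a quotient stack are equivalent to the same properties of the acting scheme, $R$ is smooth and irreducible.

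Irreducibility of $M_{X,H}(r,c_1,c_2)$ is then immediate, as it is the image of the irreducible variety $R$ under the surjective quotient morphism. For normality I would invoke the general principle that a good quotient of a normal variety by a reductive group is normal: the quotient morphism $\pi\colon R\to M_{X,H}(r,c_1,c_2)$ is affine with structure sheaf the invariants $(\pi_*{\mathcal O}_R)^{\mathrm{GL}(V)}$, and an invariant rational function integral over this ring of invariants is integral over the normal ${\mathcal O}_R$, hence lies in ${\mathcal O}_R$ and, being invariant, in the invariants. As $R$ is smooth and therefore normal, $M_{X,H}(r,c_1,c_2)$ is normal.

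For the open subscheme $M^s_{X,H}(r,c_1,c_2)$ of stable bundles, let $R^s\subset R$ be the corresponding open subset. A stable bundle is simple, so $\mathrm{PGL}(V)$ acts on $R^s$ with trivial stabilizers and $M^s_{X,H}(r,c_1,c_2)=R^s/\mathrm{PGL}(V)$ is a geometric quotient by a free action of the smooth group $\mathrm{PGL}(V)$; since $R^s$ is smooth, the quotient is a smooth variety. The same conclusion follows from deformation theory via Luna's slice theorem: near a point $[E]$ the moduli space is \'etale-locally modelled on $\mathrm{Ext}^1(E,E)/\!\!/\mathrm{Aut}(E)$, which is defined because $\mathrm{Ext}^2(E,E)=0$ (Lemma~\ref{l3.3}) makes the deformation space $\mathrm{Ext}^1(E,E)$ unobstructed and smooth; at a stable point $\mathrm{Aut}(E)$ reduces to scalars and the local model is simply the smooth space $\mathrm{Ext}^1(E,E)$, while at a general polystable point the reductive quotient of this smooth affine space reproves normality.

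The step needing genuine care is not any of these formal passages but the verification that the construction survives the singularity of $X$. One must confirm that boundedness of the $H$-semistable family and the formation of the good quotient go through over the non-normal (though Gorenstein) surface $X$; this is guaranteed by Simpson's construction over arbitrary projective schemes, together with the Gorenstein property and the explicit dualizing sheaf (Lemma~\ref{l2.1}, Proposition~\ref{p2.4}) that underpin the Serre-duality arguments employed in Lemma~\ref{l3.2} and Lemma~\ref{l3.3}. The one essentially surface-dependent hypothesis is the existence of a polarization $H$ with $H\cdot(\omega_X+F_y)<0$: without it, $H$-semistable bundles need not be prioritary and the identification of stacks in the first paragraph fails. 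Exhibiting such $H$ is the supplementary content of Theorem~\ref{thm1} and is where the geometry of $X$ truly enters; everything else is the formal descent from a smooth irreducible quotient stack to its good quotient.
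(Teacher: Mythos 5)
Your proposal takes essentially the same route as the paper: Corollary~\ref{c3.7} gives a smooth irreducible stack, which is presented as a quotient stack $[Q^{ss}/\mathrm{GL}(N)]$ for an open subscheme $Q^{ss}$ of a Quot scheme, so $Q^{ss}$ is smooth and irreducible, and the moduli space is the GIT quotient (via Simpson's construction), hence normal and irreducible with smooth stable locus. The additional details you supply --- normality of good quotients of normal varieties, the free $\mathrm{PGL}$-action on the stable locus, and the role of the hypothesis $H\cdot(\omega_X+F_y)<0$ in identifying the semistable stack with a substack of prioritary sheaves --- are consistent with and merely expand the paper's argument (the existence of such an $H$ is addressed in the lemma following the theorem, not in the theorem itself).
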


\begin{proof}
This can be proved on similar lines as the corresponding part of the proof
of \cite[p. 208,
Theorem 1]{W}. The smooth irreducible stack $\text{H-SS}^{vect}_X$
(see Corollary \ref{c3.7}) is a quotient stack $[Q^{ss}/{\rm GL}(N)]$, where $Q^{ss}$ is an open
subscheme of a quot scheme. Hence $Q^{ss}$ is a smooth irreducible scheme. The moduli
space $M_{X,H}(r,c_1,c_2)$ is the GIT quotient $Q^{ss}/\!\!/ {\rm PGL}(N)$ for
Simpson's polarization on $Q^{ss}$ (see \cite{Si}). Hence $M_{X,H}(r,c_1,c_2)$ is a normal
irreducible variety and its open subscheme corresponding to stable points, i.e., the open
subscheme corresponding to stable vector bundles, is a smooth variety.
\end{proof}

\begin{lemma}
There exists an ample line bundle $H$ on $X$ such that $$H \cdot \omega_X + F_y\, <\,0\, .$$
\end{lemma}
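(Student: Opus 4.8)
The plan is to produce $H$ explicitly in terms of the generators of $\mathrm{Pic}(X)$ and to transfer the required intersection-number inequality to the smooth Hirzebruch surface $Z$, where every class is classical. By Lemma \ref{l2.2}(1) any candidate can be written as $H\,=\,\mathcal O_{p_1}(a)\otimes p_1^*B$ with $a\in\mathbb Z$ and $B\in\mathrm{Pic}(Y)$; since the nodal rational curve $Y$ carries line bundles of every degree, I am free to prescribe both $a$ and $b\,:=\,\deg B$. First I would pull everything back along the finite birational morphism $p\,:\,Z\to X$. Using $p^*\mathcal O_{p_1}(1)\,\cong\,\mathcal O_{p_0}(1)$ together with $\deg\pi^*B\,=\,b$ (so that $\pi^*B\cong\mathcal O_{\mathbb P^1}(b)$ depends only on $b$), one obtains $H_Z\,=\,p^*H\,=\,a\xi+bf$, where $\xi\,:=\,c_1(\mathcal O_{p_0}(1))$ and $f$ denotes a fiber of $p_0$. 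Because $p$ is finite of degree one, the projection formula gives $H\cdot(\omega_X+F_y)\,=\,H_Z\cdot p^*(\omega_X+F_y)$, so it suffices to arrange that the right-hand intersection is negative on $Z$.

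Next I would identify the class $p^*(\omega_X+F_y)$. From the conductor computation in the proof of Proposition \ref{p2.4} one has $p^*\omega_X\,\cong\,\omega_Z\otimes\mathcal O_Z(\sum_j(P_{x_j}+P_{z_j}))$, while for a non-singular point $y$ the divisor $p^*F_y$ is a single fiber of $p_0$. Since each of $P_{x_j}$, $P_{z_j}$ and $p^*F_y$ is a fiber, all their classes equal $f$, whence $p^*(\omega_X+F_y)\,=\,\omega_Z+(2g+1)f$ in $\mathrm{Pic}(Z)$. Using the standard Hirzebruch relations $\xi^2=-e$, $\xi\cdot f=1$, $f^2=0$ and $\omega_Z=-2\xi-(e+2)f$, a direct computation then gives
\begin{equation*}
H\cdot(\omega_X+F_y)\;=\;(a\xi+bf)\cdot\bigl(-2\xi+(2g-1-e)f\bigr)\;=\;a(2g+e-1)-2b,
\end{equation*}
which is negative precisely when $b\,>\,\tfrac12\,a(2g+e-1)$.

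It remains to guarantee ampleness of $H$. Here I would invoke that ampleness is detected after pulling back along the finite surjective morphism $p$: the bundle $H$ is ample on $X$ if and only if $H_Z=a\xi+bf$ is ample on $Z$. Since $\xi=c_1(\mathcal O_{p_0}(1))$ is $p_0$-ample, the class $a\xi+bf$ is ample on the smooth surface $Z$ whenever $a>0$ and $b$ is sufficiently large (a relatively ample class plus the pullback of a sufficiently positive bundle from $\overline Y$). Taking, say, $a=1$ and then choosing $b$ large enough to satisfy simultaneously this ampleness bound and the inequality $b>\tfrac12(2g+e-1)$ produces the required $H$.

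The only genuinely non-formal input is the passage to $Z$: one must know that $p^*$ preserves intersection numbers of Cartier classes (which holds because $p$ is proper, finite and of degree one) and that ampleness descends along $p$ (as in the standard finite-pullback criterion, cf. \cite{Ha}). I expect the verification of this descent of ampleness, rather than the arithmetic, to be the point requiring care; once the classes are written on $Z$, the intersection computation and the final choice of $a,b$ are routine.
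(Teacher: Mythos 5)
Your proof is correct and follows essentially the same route as the paper: both take $H$ to be the tautological class plus $b$ times the fiber class, reduce the self-intersection computations to the Hirzebruch surface $Z$, obtain $H\cdot(\omega_X+F_y)=a(2g+e-1)-2b$ (the paper does the $a=1$ case, with a harmless sign typo in its final line), and conclude by taking $b\gg 0$. Your explicit verification of ampleness via the finite-pullback criterion is a point the paper glosses over, but it does not change the substance of the argument.
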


\begin{proof}
This follows essentially imitating the proof of the first part of \cite[Lemma 10]{W}. 
Let the notations be as in Lemma \ref{l3.2}. By Lemma \ref{l2.2}, we may take
$H\,=\, b F_y + \sigma$, $b\,>>\, 0$, where $\sigma$ is
the class of ${\mathcal O}_{p_1}(1)$. One has $$F_y\cdot F_y\,=\,0\, , ~\ F_y\cdot \sigma
\,=\,1\ \text{ and }~\ \sigma \cdot \sigma \,=\, {\mathcal O}_{p_0}(1)\cdot
{\mathcal O}_{p_0}(1)\,=\, - e$$ by \cite[Chapter V, Proposition 2.9]{Ha}.
Using Proposition \ref{p2.4} it follows that the class
of $\omega_X+F_y$ is $(2g-1-e)F_y - 2 \sigma$. Hence 
$$
\begin{array}{ccl}
H \cdot (\omega_X+F_y) &  =  &  (b F_y + \sigma)((2g-1-e) F_y - 2 \sigma \\
{} & =  &  -2b + 2g -1 - e + 2 e\\
{} & =  &  -2b +2g-1 - e. 
\end{array}
$$
It follows that $H \cdot (\omega_X+F_y)\, <0$ for $b >>0$, i.e., $(c_1(H) \cup c_1(\omega_X+
{\mathcal O}_X(F_y)))\cap [X]\, <\,0$ for $b >>0$.
\end{proof}

\section{Rationality of $M_X(r,c_1,c_2)$}

For a coherent sheaf $W$ on projective variety 
${\mathcal M}$ equipped with a very ample line bundle ${\mathcal H}$, define
$W(m)\,:= \,W\otimes {\mathcal H}^{\otimes m}$. Let
$P_{\mathcal M}(W,m)$ denote the Hilbert polynomial of $W$ with respect to $\mathcal
H$ \cite{HL}. Then
$$P_{\mathcal M}(W,m)\,=\, \sum _{i=0}^{\dim supp.(W)} a_i(W) \frac{m^i}{i!}\, , \ a_i(W) \ 
{\rm integers}\, .$$
The rank of $W$ is $r(W)\,=\, \frac{a_d(W)}{a_d({\mathcal O}_{\mathcal M})},~ d\,=\,
\dim \mathcal M$,
and the degree of $W$ is $d(W)\,= \,a_{d-1}(W)- r(W)a_{d-1}({\mathcal O}_{\mathcal M})$.

By the projection formula, 
$$(p_*E)(m)\,:=\, p_*E \otimes H^{\otimes m}\,\cong\, p_*(E
\otimes p^*H^{\otimes m})\,=\,p_*(E \otimes H_Z^{\otimes m})\,=\, p_*(E(m))\, .$$
Hence $H^i(X, (p_*E)(m))\, =\, H^i(X, p_*(E(m)))\, , \forall i$. Since $p$ is a finite map, 
$$H^i(X, p_*(E(m)))\, =\, H^i(Z, E(m))\, .$$ It follows that 
\begin{equation}\label{pxz}
P_X(p_*E , m)\, =\, P_Z(E, m)\, . 
\end{equation}

One has a short exact sequence 
\begin{equation}\label{ox} 
 0\,\longrightarrow\, {\mathcal O}_X\, \longrightarrow\, p_* {\mathcal O}_Z
\,\longrightarrow\, \bigoplus_{j=1}^{g}{\mathcal O}_{P_j}\,\longrightarrow\, 0\, .
\end{equation}
Tensoring \eqref{ox} with $H^{\otimes m}$ and using \eqref{pxz}, we see that 
the Hilbert polynomials satisfy the equation 
$$P_X({\mathcal O}_X, m) \,=\, P_Z({\mathcal O}_Z,m)+ \sum_j \chi({\mathcal O}_{P_j}(m))\, .$$
Since $d(H\vert_{P_j})\,=\,\ell$, we have $\chi({\mathcal O}_{P_j}(m))\,= \,\ell m +1$.
Hence comparing the coefficients of powers of $m$ it follows that
\begin{equation}\label{aox}
a_2({\mathcal O}_X)\,=\,a_2({\mathcal O}_Z), \ a_1({\mathcal O}_X)\,=\,
 a_1({\mathcal O}_Z) +\ell g\, ,~\ \ a_0({\mathcal O}_X)\,=\,a_0({\mathcal O}_Z)+ g \, .
\end{equation}

\subsection{Generalized parabolic bundles}\label{4.1}

Here we give a definition of a generalized parabolic bundle (GPB for short) suitable
in our special case. For a more general definition of a GPB and generalities on them
the reader may refer to \cite[Section 2]{B0}.
 
Let $F$ be a vector bundle on $X$ and $E\,:=\, p^*F$ its pullback to $Z$.
Since $E\vert_{P_{x_j}}
\,\cong\, p^*(F\vert_{P_j}) \,\cong\, E\vert_{P_{z_j}}$, we get a canonical isomorphism
$$\sigma_j\,:\, E\vert_{P_{x_j}} \,\stackrel{\sim}{\longrightarrow}\, E\vert_{P_{z_j}}$$
lying over the isomorphism $\tau_j$ in \eqref{tau}. Let $\sigma\,:=\,
 (\sigma_1\, , \cdots\, ,\sigma_g)$. Thus a vector bundle $F$ on $X$ determines a pair
$(E\, , \sigma)$ as above. We call such a pair a generalized parabolic bundle.

Conversely, given a GPB $(E\, , \sigma)$ on $Z$, we get a vector bundle $F$ on $X$
in the following way. Let $F_j(E)\,\subset\, E\vert_{P_{x_j}} \oplus E\vert_{P_{z_j}}$
denote the graph of $\sigma_j$. The surjective morphism $E\,\longrightarrow\,
E\vert_{P_{x_j}} \oplus E\vert_{P_{z_j}}$ produces a surjection of 
${\mathcal O}_X$--modules 
$$p_*E\, \longrightarrow\, \bigoplus_j
p_*( (E\vert_{P_{x_j}} \oplus E\vert_{P_{z_j}})/F_j(E))\, .$$
 Let $F$ be the kernel of the
latter surjection, so $F$ fits in the exact sequence 
$$0\,\longrightarrow\, F\,\longrightarrow\, p_*E\,\longrightarrow\, \bigoplus_j
p_*( (E\vert_{P_{x_j}} \bigoplus E\vert_{P_{z_j}})/F_j(E) )\,\longrightarrow\, 0\,.$$
One sees that $p^*F \,=\,E$, and hence $E$ and $F$ have the same rank and same Chern
classes. The above construction gives a bijective correspondence between GPBs of rank
$r$ and Chern classes $c_1, c_2$ on $Z$ and vector bundles of rank $r$ and Chern
classes $c_1, c_2$ on $X$.
 
\begin{lemma} \label{lemma2}
Let $(E\, ,h)$ be a GPB on $Z$ determining a vector bundle $F$ on $X$. Let $E_1\,\subset
\,E$ be a torsion free subsheaf such that $E/E_1$ is torsionfree. Then $E_1$ determines
a subsheaf $F_1\,\subset\, F$ such that $\mu(F_1)\,\le \,\mu(E_1)$.
\end{lemma}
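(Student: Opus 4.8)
The plan is to use the GPB-to-bundle correspondence described just above the statement. Given the torsion free subsheaf $E_1 \subset E$ with $E/E_1$ torsionfree, I first observe that over the smooth locus $Z \setminus \bigcup_j (P_{x_j} \cup P_{z_j})$, the quotient map $p$ is an isomorphism onto $X \setminus \bigcup_j P_j$, so $E_1$ already descends to a subsheaf of $F$ there. The task is to control what happens at the gluing fibers. The natural construction is to define $F_1$ as the subsheaf of $F$ generated (as an ${\mathcal O}_X$-module) by $p_* E_1$ intersected with $F$; more precisely, since $F \hookrightarrow p_* E$, I would set $F_1 := F \cap p_* E_1$ inside $p_* E$. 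Because $p$ is finite and $E_1$ is torsion free with torsionfree quotient, $p_* E_1$ is a torsion free subsheaf of $p_* E$, and $F_1$ is then torsion free of the same rank as $E_1$ (the rank is computed on the open dense smooth locus).

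\textbf{Comparing degrees via Hilbert polynomials.}
The degree inequality $\mu(F_1) \le \mu(E_1)$ is where the real work lies. The strategy is to compare Hilbert polynomials using the machinery set up at the start of Section~4. Pulling $F_1$ back to $Z$, one gets a subsheaf $p^* F_1 \subset E$ whose restriction to the smooth locus agrees with $E_1$; the key point is that $p^* F_1 \subseteq E_1$ (the descended sheaf can only lose sections at the gluing fibers, never gain them), so $p^* F_1$ and $E_1$ differ only by a subsheaf supported on the fibers $P_{x_j}, P_{z_j}$. Using equation \eqref{pxz}, namely $P_X(p_* E_1, m) = P_Z(E_1, m)$ applied to the relevant sheaves, together with the relations \eqref{aox} between the coefficients $a_i$ on $X$ and on $Z$, I would translate the inclusion $p^* F_1 \subseteq E_1$ and the defining exact sequence for $F_1$ into a comparison of the degree coefficients $a_{1}$. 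The ranks agree, so $\mu$ is controlled purely by the degree term, and the gluing correction (the contribution at the $g$ nodal fibers, as in the $\ell g$ term of \eqref{aox}) enters with the correct sign to yield $\mu(F_1) \le \mu(E_1)$.

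\textbf{The main obstacle.}
I expect the delicate step to be verifying that the correction terms at the gluing fibers have the right sign, i.e.\ that descending a subsheaf through the GPB construction cannot increase the slope. Concretely, the subsheaf $F_1$ of $F$ corresponds to a sub-GPB of $(E, h)$ whose parabolic subspaces at each point $(x_j, z_j)$ are constrained to lie inside the graph $F_j(E)$ of $\sigma_j$. Taking the graph-intersection at each node can only cut down the space of allowed sections, which is exactly what makes $\deg F_1 \le \deg E_1$ rather than the reverse; this is the parabolic-degree bookkeeping familiar from the theory of generalized parabolic bundles in \cite{B0}. The bulk of the remaining argument is a routine but careful comparison of the exact sequence
$$0 \longrightarrow F_1 \longrightarrow p_* E_1 \longrightarrow \bigoplus_j p_*\big((E_1|_{P_{x_j}} \oplus E_1|_{P_{z_j}})/F_j(E_1)\big) \longrightarrow 0$$
against its analogue for $F$, extracting the degree inequality from the length of the skyscraper quotients. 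Once the sign is pinned down, the inequality $\mu(F_1) \le \mu(E_1)$ follows immediately.
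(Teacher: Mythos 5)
Your overall plan coincides with the paper's proof: the subsheaf $F_1$ you define as $F\cap p_*E_1$ inside $p_*E$ is exactly the paper's $F_1$, namely the kernel of $p_*E_1\longrightarrow\bigoplus_j p_*\bigl((E_1|_{P_{x_j}}\oplus E_1|_{P_{z_j}})/F_j(E_1)\bigr)$ with $F_j(E_1)=F_j(E)\cap(E_1|_{P_{x_j}}\oplus E_1|_{P_{z_j}})$, and the degree comparison is indeed carried out by comparing the coefficients $a_1$ via \eqref{pxz} and the relation between $a_1({\mathcal O}_X)$ and $a_1({\mathcal O}_Z)$. But the step you defer as ``routine bookkeeping'' is the only real content of the lemma, and the heuristic you offer for it does not suffice. ``Intersecting with the graph can only cut down sections'' yields only $r(Q_j(E_1))\ge 0$, hence only $a_1(F_1)\le a_1(p_*E_1)=a_1(E_1)$. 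That is not enough, because the slopes of $F_1$ and $E_1$ are normalized by different constants: since $\chi({\mathcal O}_X(m))=\chi({\mathcal O}_Z(m))-g(\ell m+1)$ by \eqref{ox}, one finds $d_X(p_*E_1)=d_Z(E_1)+\ell g\,r(E_1)$. So with no cut at all ($Q_j(E_1)=0$, $F_1=p_*E_1$) the inequality would go the \emph{wrong} way. What is needed is a \emph{lower} bound on the cut at each node, namely $r(Q_j(E_1))\ge\max\{r(E_1|_{P_{x_j}}),\,r(E_1|_{P_{z_j}})\}=r(E_1)$, which exactly absorbs the $\ell g\,r(E_1)$ discrepancy in the normalization.

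This lower bound follows from the one fact your sketch never actually uses: $F_j(E)$ is the graph of an \emph{isomorphism}, so both projections $F_j(E)\to E|_{P_{x_j}}$ and $F_j(E)\to E|_{P_{z_j}}$ are injective; restricting to $F_j(E_1)$ gives $r(F_j(E_1))\le\min\{r(E_1|_{P_{x_j}}),\,r(E_1|_{P_{z_j}})\}$ and hence $r(Q_j(E_1))=r(E_1|_{P_{x_j}})+r(E_1|_{P_{z_j}})-r(F_j(E_1))\ge\max\{r(E_1|_{P_{x_j}}),\,r(E_1|_{P_{z_j}})\}$. Without this rank inequality the sign of the final estimate is not pinned down and the argument does not close. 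A minor further point: your assertion $p^*F_1\subseteq E_1$ is only correct after killing torsion ($p^*$ is merely right exact), which is why the paper works throughout with $F_1\subset p_*E_1$ on $X$ rather than pulling back to $Z$.
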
 

\begin{proof}
Since the quotient $E/E_1$ is torsion free, it follows that $$E_1\vert_{P_{x_j}}
\oplus E_1\vert_{P_{z_j}}\,\subset\, E\vert_{P_{x_j}} \oplus E\vert_{P_{z_j}}\, .$$ Let 
$$F_j(E_1)\,:=\, F_j(E) \cap (E_1\vert_{P_{x_j}} \oplus E_1\vert_{P_{z_j}})\, ,~\
\ Q_j(E_1)\,:=\, (E_1\vert_{P_{x_j}} \oplus E_1\vert_{P_{z_j}})/ F_j(E_1)\, . $$
Define the sheaf $F_1$ on $X$ by the following short exact sequence
\begin{equation} \label{gpb1}
0\,\longrightarrow\, F_1\,\longrightarrow\, p_*E_1\,\longrightarrow\, \oplus_j p_*Q_j(E_1)
\,\longrightarrow\, 0\, .
\end{equation}
Since $h$ is an isomorphism, the projection $pr_j\,:\, F_j(E)\,\longrightarrow\,
E\vert_{P_{x_j}}$ is an isomorphism. Therefore, $$pr_j\vert_{F_j(E_1)}\,\longrightarrow
\, E_1\vert_{P_{x_j}}$$ is an injection. Hence 
$r(F_j(E_1)) \,\le\, r(E_1\vert_{P_{x_j}})$. Similarly, we have $r(F_j(E_1))
\,\le \, r(E_1\vert_{P_{z_j}})$. 
Hence tensoring \eqref{gpb1} by $H^{\otimes m}$, we have
\begin{equation} \label{gpb2}
0 \,\longrightarrow\, F_1(m)\,\longrightarrow\, (p_*E_1)(m)\,\longrightarrow\,
\oplus_j p_*Q_j(E_1)(m)\,\longrightarrow\, 0\, .
\end{equation} 
Therefore, $P_X(F_1,m)\,=\, P_X(p_*E_1,m) - \sum_j P_{{P_j}} (p_*(Q_j(E_1),m))$. By \eqref{pxz}, 
$$P_X(p_*E_1, m)\,=\, P_Z(E_1,m))~\ \text{ and }~\ P_{{P_j}} (p_*Q_j(E_1),m)
\,=\, \chi_{(P_j)}(p_*Q_j(E_1),m)\, .$$ Hence comparing coefficients of $m$ in
\eqref{gpb2}, we conclude that
$$a_1(E_1) \,=\, a_1(F_1) + b_1\, ,$$
where 
$$b_1\,=\, \sum_j r(p_*(Q_j(E_1)(m))) \ell \,=\, \sum_j r(p_*(Q_j(E_1))) \ell 
$$
$$
=\, \sum_j \ell (r(E_1\vert_{P_{x_j}}) + r(E_1\vert_{P_{z_j}}) - r(F_j(E_1)))\, .$$
Since $r(F_j(E_1))\,\le\, r(E_1\vert_{P_{x_j}})$, we have 
$$b_1\,\ge\, \sum_j \ell (r(E_1\vert_{P_{z_j}}))\, .$$ Similarly, 
$b_1\,\ge \,\sum_j \ell (r(E_1\vert_{P_{x_j}})$, and hence 
$$b_1\,\ge\, \ell \sum_j \ {\rm max} \ \{r(E_1\vert_{P_{x_j}})\,,\,
(r(E_1\vert_{P_{z_j}})\}\, .$$
Then $a_1(E_1)\,=\, a_1(F_1) + b_1\,\ge\, a_1(F_1)+ \ell \sum_j \ {\rm max} \
\{r(E_1\vert_{P_{x_j}})\, ,\, (r(E_1\vert_{P_{z_j}}) \}\, .$
By definition,
$$
\begin{array}{lcl}
d(E_1)& = & a_1(E_1) -r(E)a_1({\mathcal O}_X) \\
{}  & \ge & a_1(F_1)+ \ell \sum_j \ {\rm max} \ \{ r(E_1\vert_{P_{x_j}})\,,\,
(r(E_1\vert_{P_{z_j}}) \} - r(F_1)a_1({\mathcal O}_X)   \\
{} & = & a_1(F_1)+ \ell \sum_j \ {\rm max} \ \{ r(E_1\vert_{P_{x_j}})\,,
\, (r(E_1\vert_{P_{z_j}}) \} - r(F_1)a_1({\mathcal O}_Z) - \ell g r(F_1)   \\
{} & = & d(F_1) + \ell \sum_j ( \ {\rm max} \ \{ r(E_1\vert_{P_{x_j}})\, ,\,
(r(E_1\vert_{P_{z_j}}) \} -r(F_1))    \\ 
{} & \ge & d(F_1)\, .
\end{array}
$$
Since $r(F_1) \,=\, r(E_1)$, the result follows.
\end{proof}

\begin{proposition} \label{prop3}
If $E$ is an $H_Z$-semistable (respectively, $H_Z$-stable) vector bundle on $Z$ with $(E\, ,h)$
giving a vector bundle $F$ on $X$, then $F$ is $H$-semistable (respectively, $H$-stable).
\end{proposition}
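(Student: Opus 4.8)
The plan is to test the $H$-semistability of $F$ directly against its subsheaves and to convert each such subsheaf into a subsheaf of $E$, so that Lemma \ref{lemma2} together with the $H_Z$-semistability of $E$ applies. First I would record the normalization that makes the two slopes comparable: applying the computation in the proof of Lemma \ref{lemma2} to the trivial inclusion $E\,\subseteq\,E$ (for which $r(E\vert_{P_{x_j}})\,=\,r(E\vert_{P_{z_j}})\,=\,r(F_j(E))\,=\,r$, so the correction term vanishes) yields $d(F)\,=\,d(E)$; since $r(F)\,=\,r(E)$, this gives $\mu(F)\,=\,\mu(E)$. Hence it suffices to show that every subsheaf $F_1\,\subseteq\,F$ satisfies $\mu(F_1)\,\le\,\mu(E)$, with strict inequality for proper nonzero $F_1$ in the stable case.

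Next, given $F_1\,\subseteq\,F$, I would reduce to the case where $F/F_1$ is torsion-free by passing to the saturation, which only increases $\mu(F_1)$. I then produce a companion subsheaf on $Z$: let $E_1\,\subseteq\,E\,=\,p^*F$ be the saturation of the image of the natural map $p^*F_1\,\longrightarrow\,p^*F$. Since $p$ is an isomorphism away from the fibres $P_j$, this image has rank $r(F_1)$, so $r(E_1)\,=\,r(F_1)$, and by construction $E/E_1$ is torsion-free; thus Lemma \ref{lemma2} applies to $E_1$ and furnishes a subsheaf $\widehat{F_1}\,\subseteq\,F$ with $\mu(\widehat{F_1})\,\le\,\mu(E_1)$ and $r(\widehat{F_1})\,=\,r(E_1)$.

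The key compatibility step is to check that $F_1\,\subseteq\,\widehat{F_1}$. Because the image of $p^*F_1$ in $E$ lies in $E_1$, the unit of adjunction gives a map $F_1\,\longrightarrow\,p_*E_1$, injective as $p$ is generically an isomorphism and $F_1$ is torsion-free. Composing with the defining surjection $p_*E_1\,\longrightarrow\,\bigoplus_j p_*Q_j(E_1)$ gives zero, since $Q_j(E_1)\,=\,(E_1\vert_{P_{x_j}}\oplus E_1\vert_{P_{z_j}})/F_j(E_1)$ injects into the quotient $(E\vert_{P_{x_j}}\oplus E\vert_{P_{z_j}})/F_j(E)$ that defines $F$, and $F_1$ maps to zero there. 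Hence $F_1\,\subseteq\,\widehat{F_1}$, and as $r(F_1)\,=\,r(E_1)\,=\,r(\widehat{F_1})$ the quotient $\widehat{F_1}/F_1$ is torsion, so $\mu(F_1)\,\le\,\mu(\widehat{F_1})$.

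Finally I would assemble the chain
$$\mu(F_1)\,\le\,\mu(\widehat{F_1})\,\le\,\mu(E_1)\,\le\,\mu(E)\,=\,\mu(F)\,,$$
where the middle inequality is Lemma \ref{lemma2} and the third is the $H_Z$-semistability of $E$; this proves $F$ is $H$-semistable. In the stable case, a proper nonzero $F_1$ gives $0\,<\,r(E_1)\,<\,r(E)$, so $H_Z$-stability makes the third inequality strict, yielding $\mu(F_1)\,<\,\mu(F)$ and hence $H$-stability of $F$. I expect the main obstacle to be the compatibility step $F_1\,\subseteq\,\widehat{F_1}$, namely correctly reversing the construction of Lemma \ref{lemma2} through the finite non-flat map $p$ over the nodal fibres and matching up the graph data; the slope normalization and the final chain of inequalities are then routine.
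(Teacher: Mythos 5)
Your proposal is correct and follows essentially the same route as the paper's proof: pull back the subsheaf to $Z$, saturate to get $E_1$ with $E/E_1$ torsion-free, apply Lemma \ref{lemma2}, verify that the resulting subsheaf of $F$ contains the original one, and chain the slope inequalities using $\mu(E)=\mu(F)$. You merely spell out in more detail the containment $F_1\subseteq\widehat{F_1}$ and the equality $\mu(E)=\mu(F)$, both of which the paper asserts with less justification.
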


\begin{proof}
Let $F'_1\,\subset\, F$ be a torsion free subsheaf. Then we have $(p^*F'_1/{\rm Torsion})\,
\subset\, E$. Let $E_1$ be the saturation of $(p^*F'_1/{\rm Torsion})$ in $E$. 
Then $E/E_1$ is torsionfree and $E_1$ gives a torsionfree subsheaf $F_1\,\subset\, F$ such
that $F'_1\,\subset\, F_1$. Since the quotient $F_1/F'_1$ is a torsion sheaf,
we have $\mu(F'_1)\,\le \,\mu(F_1)$. By Lemma \ref{lemma2}, $\mu(F_1)\,\le\,\mu(E_1)$ so that
$\mu(F'_1) \le \mu(E_1)$. If $E$ is semistable (respectively, stable), then $\mu(E_1)\,\leq\,
\mu(E)$ (respectively, $\mu(E_1)\, <\, \mu(E)$) so that 
$$
\mu(F'_1) \,\le\, \mu(E_1) \,\leq\, \mu(E)\,=\, \mu(F)\ {\rm (respectively,}\
\mu(F'_1) \,\le\, \mu(E_1) \,<\, \mu(E)\,=\, \mu(F){\rm )}
$$
proving the proposition. 
\end{proof}

\begin{theorem}\label{theorem2}
Let $M^s_{X,H}(r,c_1,c_2)$ be the moduli space of $H$-stable (slope stable) vector bundles of rank $r$ and Chern classes $c_1, c_2$ on $X$. Let $\Delta(r,c_1,c_2)= \frac{1}{r}(c_2- \frac{r-1}{2r} c_1^2)$. Let $F_Z$ denote the general fiber of $Z$. If $\Delta(r,c_1,c_2) >>0$, then $M^s_{X,H}(r,c_1,c_2)$ is a non-empty, smooth, irreducible, rational, quasiprojective variety in the following cases ($c_1, c_2$ below denote the classes on $Z$ which are pull-backs of the classes $c_1, c_2$ on $X$):\\
(1) $c_1.F_Z = 1 \ {\rm or} \ r-1 \ ( {\rm mod} \ r)$.\\
(2) $c_1.F_Z = r-2 \  ( {\rm mod} \ r) \ {\rm and} \ c_2-c_1^2/2 -c_1.\omega_Z/2 -(r-1)=0 
  \ ( {\rm mod} \ 2)$.\\ 
(3) $c_1.F_Z = 2 \ ( {\rm mod} \ r) \ {\rm and} \ c_2 + c_1.C_0 -c_1^2/2 + c_1.\omega_Z/2 +1 =0 
  \ ( {\rm mod} \ 2)$.\\
\end{theorem}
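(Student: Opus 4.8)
The plan is to reduce everything to the smooth Hirzebruch surface $Z$ by means of the generalized parabolic bundle correspondence of Subsection~\ref{4.1}, and then to invoke the theorem of Costa and Mir\'o-Roig. The three cases (1)--(3) are exactly the cases of \cite[Theorem A]{CM} in which the moduli space $M^s_{Z,H_Z}(r,c_1,c_2)$ on the smooth Hirzebruch surface $Z$ is non-empty, smooth, irreducible, rational and quasiprojective once $\Delta(r,c_1,c_2)\,\gg\,0$. Thus the real content of the theorem is the transfer of these properties from $Z$ to $X$, and I would organize the argument around that transfer.

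First I would dispose of non-emptiness, smoothness, irreducibility and quasiprojectivity. Non-emptiness of $M^s_{X,H}$ follows from that of $M^s_{Z,H_Z}$: choosing an $H_Z$-stable bundle $E$ on $Z$ and any gluing datum, the associated vector bundle $F$ on $X$ is $H$-stable by Proposition~\ref{prop3}. Smoothness of the stable locus, irreducibility, and quasiprojectivity of $M_{X,H}(r,c_1,c_2)$ are already established in Theorem~\ref{thm1} (via Lemma~\ref{l3.3} and Corollary~\ref{c3.7}) and its GIT construction. Only rationality then remains to be proved.

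For rationality I would exhibit $M^s_{X,H}$ as birational to a fibration over $M^s_{Z,H_Z}$. By Subsection~\ref{4.1}, a vector bundle $F$ on $X$ is the same datum as a GPB $(E,\sigma)$ on $Z$ with $E=p^*F$ and $\sigma=(\sigma_1,\dots,\sigma_g)$, where $\sigma_j\in\mathrm{Isom}(E\vert_{P_{x_j}},E\vert_{P_{z_j}})$. Since $\Delta\gg 0$, a generic $H_Z$-stable $E$ has balanced (trivial) restriction to each nodal fiber $P_{x_j}$ and $P_{z_j}$, so on the dense open subset $U\subset M^s_{Z,H_Z}(r,c_1,c_2)$ where this holds each gluing space is $\mathrm{Isom}(E\vert_{P_{x_j}},E\vert_{P_{z_j}})\cong GL_r(\mathbb C)$. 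Assembling these over $U$ produces a $\prod_{j=1}^g GL_r$-torsor $\mathcal G\to U$ carrying a tautological family of GPBs, hence by Proposition~\ref{prop3} a family of $H$-stable bundles on $X$ and a classifying morphism $\Phi\colon\mathcal G\to M^s_{X,H}(r,c_1,c_2)$. Because $E$ is stable its only automorphisms are scalars, which act trivially on $\sigma$, so $\Phi$ is generically injective; and since $p^*F$ recovers $E$ up to isomorphism, points of distinct fibers have distinct images. A dimension count using $\dim M^s_{X,H}=\dim\mathrm{Ext}^1(F,F)$ (Lemma~\ref{l3.3}) matches $\dim\mathcal G$ with $\dim M^s_{X,H}$, and since $M^s_{X,H}$ is irreducible (Theorem~\ref{thm1}) the map $\Phi$ is dominant, hence birational onto $M^s_{X,H}(r,c_1,c_2)$.

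It then remains to check that $\mathcal G$ is rational. As $GL_r$ is a special group in the sense of Serre, the torsor $\mathcal G\to U$ is Zariski-locally trivial, so $\mathcal G$ is birational to $U\times GL_r^{\,g}$; since $U$ is rational by \cite[Theorem A]{CM} and $GL_r^{\,g}$ is rational, $\mathcal G$, and therefore $M^s_{X,H}(r,c_1,c_2)$, is rational. I expect the main obstacle to be the third step: pinning the fibration $\Phi$ down rigorously, that is, verifying that for $\Delta\gg 0$ the generic stable bundle on $Z$ has balanced restriction to the nodal fibers (so that $U$ is dense and the gluing spaces are honestly $GL_r$) and confirming the equality $\dim\mathcal G=\dim M^s_{X,H}$ needed for dominance. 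The rationality bookkeeping afterward is formal, resting only on the speciality of $GL_r$ and the rationality of the base supplied by \cite{CM}.
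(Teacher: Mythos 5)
Your overall strategy coincides with the paper's: non-emptiness, smoothness and irreducibility are obtained exactly as you say (Costa--Mir\'o-Roig bundles on $Z$, the GPB correspondence of Subsection~\ref{4.1}, Proposition~\ref{prop3}, and Theorem~\ref{thm1}), and rationality is proved by realizing $M^s_{X,H}$ birationally as the space of gluing data over $M^s_{Z,H_Z}$. Where you diverge is in how the gluing space is shown to be birationally a product. The paper avoids torsors altogether: it forms the sheaf ${\mathcal H}_{M,x_j}$ of \emph{homomorphisms} $\mathrm{Hom}(E\vert_{P_{x_j}},\tau_j^*(E\vert_{P_{z_j}}))$, takes its direct image on $M^s_{Z,H_Z}$, restricts to an open set where this is a vector bundle and then to a smaller open set where it is trivial, and observes that the isomorphisms form a Zariski open subset of the total space; rationality of the total space of a trivialized vector bundle over a rational base is then immediate. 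You instead package the isomorphisms as a $\prod_j GL_r$-torsor and invoke speciality of $GL_r$. That works, but it carries an inaccuracy you partly flagged yourself: for the Costa--Mir\'o-Roig classes in cases (1)--(3) the restriction of a generic stable $E$ to a fiber of $Z$ is generally \emph{not} trivial (e.g.\ for $c_1.F_Z\equiv 1 \pmod r$ the splitting type is ${\mathcal O}^{r-1}\oplus{\mathcal O}(-1)$ up to twist), so the gluing space is a torsor under $\mathrm{Aut}(E\vert_{P_{x_j}})$, which is an extension of a product of general linear groups by a unipotent group rather than $GL_r$ itself. This group is still special, so your argument can be repaired, but the paper's formulation --- open subset of a $\mathrm{Hom}$-bundle, which only requires $E\vert_{P_{x_j}}\cong \tau_j^*(E\vert_{P_{z_j}})$ so that isomorphisms exist --- bypasses both the triviality claim and the need for Zariski-local triviality of a torsor. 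Your dimension count for dominance of $\Phi$ is likewise unnecessary in the paper's setup, since the GPB correspondence of Subsection~\ref{4.1} is already a bijection between gluing data and bundles on $X$, so the open set of the $\mathrm{Hom}$-bundle maps isomorphically onto an open subset of $M^s_{X,H}$.
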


\begin{proof}
In \cite{CM}, Costa and Mir\'o-Roig construct explicitly generic $H$- stable vector 
bundles $E$ of rank $r$ and Chern classes $c_1, c_2$ on $Z$ in the above cases. It is 
easy to see that in each of these cases, the restrictions of $E$ to all fibers are 
isomorphic. Using the bijective correspondence between GPBs on $Z$ and vector bundles on 
$X$, together with Proposition \ref{prop3}, we can construct (generic) $H$-stable vector 
bundles on $X$ of rank $r$ and Chern classes $c_1, c_2$. Thus $M^s_{X,H}(r,c_1,c_2)$ is 
non-empty in all the cases. By Theorem \ref{thm1}, this moduli space is irreducible and 
smooth.

We now turn to the question of rationality of $M^s_{X,H}(r,c_1,c_2)$. We shall in fact 
show that whenever $M^s_{Z,H_Z}(r,c_1,c_2)$ is rational, the variety 
$M^s_{X,H}(r,c_1,c_2)$ is also rational. Since the rationality of 
$M^s_{Z,H_Z}(r,c_1,c_2)$ is known in the cases listed in the statement of the theorem 
\cite{CM}, this will prove the theorem.

The moduli space $M_{Z,H_Z}(r,c_1,c_2)$ is a geometric invariant theoretic (GIT) 
quotient of a suitable quot scheme $Q^{ss}$ by ${\rm PGL}(N)$. Let $Q^s$ and 
$M^s_{Z,H_Z}(r,c_1,c_2)$ denote the subschemes corresponding to stable vector bundles, 
then $M^s_{Z,H_Z}(r,c_1,c_2) \,=\, Q^s /\!\!/ {\rm PGL}(N)$. Let
$${\mathcal U}_Q \,\longrightarrow \, Q^s \times Z$$
be the universal quotient vector bundle.

For simplicity of exposition, let us take $g\,=\,1$ and write $x_1\,=\,x\, ,\ z_1\,
=\,z$. We have vector bundles 
${\mathcal U}_Q\vert_{Q^s\times P_x}$ and $(id \times \tau)^*({\mathcal U}_Q\vert_{Q^s
\times P_z})$ on $Q^s \times P_x$. Consider the sheaf 
$${\mathcal H}_Q\,=\, Hom ({\mathcal U}_Q\vert_{Q^s\times P_x}, \ (id \times \tau)^*({\mathcal U}_Q\vert_{Q^s\times P_z}))\, .$$
Since scalars (the isotropy) acts trivially on the sheaf ${\mathcal H}_Q$, it descends to
a sheaf ${\mathcal H}_{M,x}$ on $M^s_{Z,H_Z}(r,c_1,c_2)\times P_x$. Let 
$${\widetilde H}_x \,=\, Rp_{M_Z *} {\mathcal H}_{M,x}$$
denote the direct image of ${\mathcal H}_{M,x}$ on $M^s_{Z,H_Z}(r,c_1,c_2)$.  There is
a Zariski open subset $M' \,\subset\, M^s_{Z,H_Z}(r,c_1,c_2)$ such that
${\widetilde H}_x\vert_{M'}$ is locally free (by semi-continuity theorem). Hence there is
a Zariski open subset $M'' \,\subset\, M'$ such that
 $$
 {{\widetilde H}_x}\vert_{M''}\,\cong\, M'' \times {\mathbb C}^n\, .$$
 Therefore, if $M^s_{Z,H_Z}(r,c_1,c_2)$ is rational, then $M''$ and hence the total
space of ${{\widetilde H}_x}\vert_{M''}$ is  rational.
 
Note that the fiber of ${{\widetilde H}_x}\vert_{M'}$ over $[E] \,\in\, M'$ corresponds to the 
vector space $${\rm Hom} (E\vert_{P_x}\,,\, \tau^*(E\vert_{P_z}))\, .$$ Since ${\rm Hom} 
(E\vert_{P_x}, \tau^*(E\vert_{P_z}))\,\supset\, {\rm Iso} (E\vert_{P_x}, 
\tau^*(E\vert_{P_z})),$ there is a Zariski open subset ${\widetilde H}'$ of the total space 
of ${{\widetilde H}_x}\vert_{M''}$ which corresponds to generalized parabolic bundles on 
$Z$. By Proposition \ref{prop3} and Section \ref{4.1}, this ${\widetilde H}'$ is 
isomorphic to an open subset of $M_{X,H}(r,c_1.c_2)$.  It now follows that 
$M_{X,H}(r,c_1,c_2)$ is rational.

In the case of $g\,>\,1$, we take the sheaf 
 \begin{equation}\label{HM}
 {\mathcal H}_M \ \longrightarrow \  M^s_{Z,H_Z}(r,c_1,c_2)\times \coprod_j  P_{x_j}
\,= \,\coprod _j (M^s_{Z,H_Z}(r,c_1,c_2) \times P_{x_j})
\end{equation} 
 to be the sheaf whose restriction to $M^s_{Z,H_Z}(r,c_1,c_2) \times P_{x_j} $ is
${\mathcal H}_{M,x_j}$. There is a Zariski open subset $S\,\subset\, M^s_{Z,H_Z}(r,c_1,c_2)$
such that the restriction of the direct image of ${\mathcal H}_M$ to $S$ is a vector
bundle. The rest of the argument works as in the one node case. 
\end{proof}

\section{Vector bundles over a real ruled surface}

In this section, we study moduli of vector bundles over a real rational ruled 
surface. Our goal is to prove that the moduli space $M(r, c_1,c_2)$ for vector bundles 
over a real rational ruled surface is rational as a real variety.

\subsection{The real rational ruled surface} \hfill

Let $\sigma$ be an anti-holomorphic involution on ${\mathbb P}^1_{\mathbb C}$. The pair 
$({\mathbb P}^1_{\mathbb C}\, , \sigma)$ defines a smooth projective curve $C$ defined
over $\mathbb R$. 
Let $(x_1\, ,z_1)\, , \cdots\, , (x_g\, , z_g)$ be distinct pairs of points of ${\mathbb 
P}^1_{\mathbb C}$. Let $Y$ be a complex nodal curve of genus $g(Y)\,=\,g$ obtained by 
identifying points $x_j$ with $z_j$ for each $j$ such that the involution $\sigma$ 
induces an anti-holomorphic involution $\sigma_Y$ on $Y$. We clarify that $\sigma_Y$
need not fix pointwise $\{x_j\}_{j=1}^g$ and $\{z_j\}_{j=1}^g$. Note that
$\sigma_Y(x_i)\, \in\, \{x_j\, ,z_j\}$ if and only if $\sigma_Y(z_i)\, \in\,
\{x_j\, ,z_j\}$.
Let $y_1\, , \cdots \, , y_g$ denote the nodes of $Y$ with $x_j\, , z_j$ identified to
$y_j$. The pair $(Y\, , \sigma_Y)$ defines a projective curve $Y_{\mathbb R}$, of
arithmetic genus $g$, defined over ${\mathbb R}$.  We have 
$$Y_{\mathbb R} \times_{\mathbb R} {\mathbb C}\,=\,Y\, .$$
The  normalization of $Y_{\mathbb R}$ is the curve $C$,  let 
$$\pi'\,:\, C\, \longrightarrow\, Y_{\mathbb R}$$
be the normalization map. If $C$ does not have any real points, then all real points
of $C$ lie in $\{y_j\}_{j=1}^g$.
 
Let ${\mathcal E}_{\mathbb R}$ be a real algebraic vector bundle
of rank two over $Y_{\mathbb R}$,
and let ${\mathcal E}\,= \,{\mathcal E}_{\mathbb R}\otimes_{\mathbb R} {\mathbb C}$ be its
base change to $\mathbb C$, which is a complex vector bundle over $Y$. The vector bundle
${\mathcal E}_{\mathbb R}$ is defined by a pair $({\mathcal E}\, ,
\sigma_{\mathcal E})$. Then $\sigma_{\mathcal E}$ induces an anti-holomorphic involution
$\sigma_X$ on $X\, := \mathbb{P}(\mathcal{E}) $. The pair $(X\, , \sigma_X)$ defines the real ruled surface 
$$X_{\mathbb R}\,:=\, {\mathbb P}({\mathcal E}_{\mathbb R}) \longrightarrow  Y_{\mathbb R}\, ,$$ 
and one has $X_{\mathbb R} \times_{\mathbb 
R} {\mathbb C} \,=\, X$.  Note that the anti-holomorphic involution $\sigma_X$ lifts the 
anti-holomorphic involution $\sigma_Y$ of $Y$. Since $\sigma_Y$ permutes the nodes of
$Y$, the  involution $\sigma_X$ permutes the fibers $\{P_j\}_{j=1}^g$, so
$\coprod_j P_j$  is  $\sigma_X$--invariant, thus $\coprod_j P_{j}$ is a real variety.
 
We have $Z \,=\, {\mathbb P}(\pi^* {\mathcal E})$; let  $\sigma_Z\, :=\, p^*\sigma_X$
be the anti-holomorphic involution induced by $\pi^*\sigma_{\mathcal E}$. The pair
$(Z\, , \sigma_Z)$ defines a ruled surface over $C$, defined over ${\mathbb R}$. 
Let $P\,:= \,\coprod_j P_{x_j}$.  Since we can canonically identify $P_j$ with $P_{x_j}$ for each $j$, we see 
 that $\sigma_Z$ leaves $P$ invariant. Let
$$
\sigma_P\, :=\, \sigma_Z\vert_P\,:\, P\,\longrightarrow\, P
$$
be the restriction.

The anti-holomorphic involution $\sigma_Z$ on $Z$ produces an anti-holomorphic involution
$\sigma_M$ on $M^s_{Z,H_Z}(r,c_1,c_2)$ defined by 
$$\sigma_M (E)\,=\, \sigma_Z^*({\overline E})\, .$$
 There is a sheaf ${\mathcal H}_M$ on $M^s_{Z,H_Z}(r,c_1,c_2)\times
 P\,=\,\coprod _j (M^s_{Z,H_Z}(r,c_1,c_2) \times P_{x_j})$ (defined by \eqref{HM} in the 
proof of Theorem \ref{theorem2}).  For a general vector bundle $E\,\in\,
 M^s_{Z,H_Z}(r,c_1,c_2)$, one has
$E\vert_{P_{x_j}}\,\cong\, \tau_j^* (E\vert_{P_{z_j}})$ for all $j$,
where $\tau_j$ is defined in \eqref{tau}. We  choose a
$\sigma_M$-invariant open subset $M'\,\subset\, M^s_{Z,H_Z}(r,c_1,c_2)$ such that $E\,\in
\,M'$ satisfies the above condition and ${\mathcal H}_M\vert_{M'\times P}$ is locally free. 

\begin{lemma}\label{l4.1}
The vector bundle ${\mathcal H}_M\vert_{M'\times P}$ is a real vector bundle. 
\end{lemma}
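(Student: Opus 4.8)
The statement is that $\mathcal H_M|_{M'\times P}$ is a real vector bundle, i.e.\ that it carries a real structure compatible with the involution on its base. Recall that a real structure on a vector bundle $V$ over a complex variety $S$ with anti-holomorphic involution $\sigma_S$ is an anti-holomorphic involution $\Theta\colon V\to V$ which is conjugate-linear on fibres and lifts $\sigma_S$; equivalently, it is an isomorphism $\sigma_S^*\overline V\xrightarrow{\sim}V$ satisfying the usual involutivity (cocycle) condition. Since $M'$ is $\sigma_M$-invariant and $P$ is $\sigma_P$-invariant, the base $M'\times P$ carries the product involution $\sigma_M\times\sigma_P$, and the plan is to produce such a $\Theta$ on $\mathcal H_M|_{M'\times P}$ lifting it.

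First I would lift the involution to the quot scheme. Because the trivial sheaf defining $Q^s$ is defined over $\mathbb R$, complex conjugation gives a canonical isomorphism of it with its conjugate, so the rule sending a quotient $q\colon\mathcal O_Z^N\twoheadrightarrow E$ to $\sigma_Z^*\overline q$ defines an anti-holomorphic involution $\sigma_Q$ on $Q^s$ lifting $\sigma_M$. Conjugating the universal quotient map then furnishes a canonical anti-holomorphic isomorphism $\Psi\colon(\sigma_Q\times\sigma_Z)^*\overline{\mathcal U}_Q\xrightarrow{\sim}\mathcal U_Q$ on $Q^s\times Z$, which is equivariant for the $\mathrm{GL}(N)$-action (after conjugating that action) and squares to the identity, since $\sigma_Z^2=\mathrm{id}$ and double conjugation is trivial.

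Next I would transport $\Psi$ through the Hom-construction defining $\mathcal H_Q$. The essential input is that $\sigma_Z$ intertwines the canonical identifications $\tau_j$ of \eqref{tau}: since each $\tau_j$ is induced by the real structure $\pi^*\sigma_{\mathcal E}$ identifying $(\pi^*\mathcal E)_{x_j}$ with $(\pi^*\mathcal E)_{z_j}$, one expects $\sigma_Z\circ\tau_j=\tau_{j'}\circ\sigma_Z$ for the index $j'$ determined by the permutation of the nodes induced by $\sigma_Y$. Granting this, for $\phi\in\mathrm{Hom}(E_w,E_{\tau_j(w)})$ I would set $\Theta(\phi):=\Psi\circ\overline\phi\circ\Psi^{-1}$; the compatibility $\sigma_Z\circ\tau_j=\tau_{j'}\circ\sigma_Z$ is precisely what places $\Theta(\phi)$ in the fibre of $\mathcal H_Q$ over $(\sigma_Q(q),\sigma_P(w))$. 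As already observed in the descent of $\mathcal H_{M,x}$ in the proof of Theorem \ref{theorem2}, the Hom of $\mathcal U_Q|_{P_x}$ with $\tau^*\mathcal U_Q|_{P_z}$ is unchanged when $\mathcal U_Q$ is twisted by a line bundle pulled back from $Q^s$, because the two arguments are twisted identically and the twists cancel; hence $\Theta$ is insensitive to the scalar/line-bundle ambiguity and descends to a map on $\mathcal H_M|_{M'\times P}$. It is conjugate-linear on fibres (from $\overline{(\cdot)}$), anti-holomorphic over the base (as $\Psi,\sigma_Q,\sigma_Z$ are), and $\Theta^2=\mathrm{id}$ because $\Psi$ is an involutive descent datum and double conjugation is trivial, which exhibits the desired real structure.

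The hard part will be the combinatorial bookkeeping forced by the fact that $\sigma_Y$ may send $x_i$ to $z_j$ rather than to an $x$-point, so that $\sigma_Z$ permutes the collection $\{P_{x_j},P_{z_j}\}_j$ with possible $x\!\leftrightarrow\! z$ swaps. One must use the canonical identification $P_j\cong P_{x_j}$ (through which $\sigma_P$ is defined on $P=\coprod_j P_{x_j}$) consistently, in order to check first that $\Theta$ genuinely carries the fibre over $P_{x_j}$ into a fibre over $P=\coprod_j P_{x_j}$ while preserving the Hom-direction from the $x$-side to the $z$-side, and second that the index $j'$ and the orientation in $\sigma_Z\circ\tau_j=\tau_{j'}\circ\sigma_Z$ are correct so that $\Theta^2=\mathrm{id}$ survives these swaps. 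Verifying this compatibility of $\sigma_Z$ with the gluing data $\tau_j$ — which ultimately rests on $\sigma_{\mathcal E}$ being a genuine real structure on $\mathcal E_{\mathbb R}$ — is the only delicate point; the remaining checks (conjugate-linearity, anti-holomorphy over the base, and descent through the scalar action) are formal.
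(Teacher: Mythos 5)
Your proposal is correct and follows essentially the same route as the paper: the real structure on ${\mathcal H}_M\vert_{M'\times P}$ is given by conjugation $f\,\longmapsto\,\overline{f}$ on $\mathrm{Hom}(E\vert_{P_{x_j}},\,\tau_j^*(E\vert_{P_{z_j}}))$, lifting $\sigma_M\times\sigma_P$ and squaring to the identity. The paper's own proof is terser --- it directly asserts ${\mathcal H}_M\vert_{\sigma_M E\times\sigma_P(P_{x_j})}\,=\,\mathrm{Hom}(\overline{E_1},\overline{E_2})$ without spelling out the descent from the quot scheme or the compatibility of $\sigma_Z$ with the identifications $\tau_j$ (including the possible $x\leftrightarrow z$ swaps) that you correctly single out as the only non-formal point.
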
  

\begin{proof}
We shall construct an anti-holomorphic involution $\sigma_H$ on ${\mathcal 
H}_M\vert_{M'\times P}$ lifting $\sigma_M\times \sigma_P$. For $(E, v_j) \,\in\, M' \times 
P_{x_j}$, we have 
$$(\sigma_M\times \sigma_P) (E, v_j)\,=\, (\sigma_Z^*{\overline E}, 
\sigma_P(v_j))\,\in\, M' \times \sigma_P(P_{x_j})$$ 
and $\sigma_Z^*{\overline 
E}\vert_{\sigma_P(P_{x_j})} \,=\, {\overline E}\vert_{P_{x_j}}$. Write 
$$E_1\,=\, E\vert_{P_{x_j}}\, ,\ 
E_2\,=\, \tau^*(E\vert_{P_{z_j}})\, .$$ 
One has ${\mathcal H}_M\vert_{E\times P_{x_j}} \,= \,Hom (E_1, E_2)$ and
$${\mathcal H}_M\vert_{\sigma_M E \times \sigma_P(P_{x_j})}\,=\,
Hom ({\overline E}\vert_{P_{x_j}}, \tau^*({\overline E}\vert_{P_{z_j}}))\,=\,
Hom (\overline{E_1}, \overline{E_2})\, .
$$
Any linear homomorphism $f\,:\, E_1\,\longrightarrow\, E_2$ induces a linear
homomorphism $$\overline{f}
\,: \,\overline{E_1}\,\longrightarrow\, \overline{E_2}$$ such that $\overline{(\overline{f})}
\, =\, f$. Hence there is a natural anti-holomorphic involution $\sigma_H$ on
${\mathcal H}_M\vert_{M'\times P}$ which lifts $\sigma_M\times \sigma_P$ and
$$\sigma_H \,:\, Hom (E_1,\, E_2)\, \longrightarrow\, Hom (\overline{E_1}, \,
\overline{E_2})$$ is defined by $f\,\longmapsto
\, \overline{f}$. Since  $\overline{(\overline{f})}\,=\, f$, it follows that
${\sigma_H}^2 \,=\,{\rm Id}$. Hence ${\mathcal H}_M\vert_{M'\times P}$ is a real vector bundle.
\end{proof}
 
\begin{theorem}\label{theorem3}
The variety $M_{X,H}(r,c_1,c_2)$ is rational as a real variety if $M_{Z,H_Z}(r,c_1,c_2)$ is rational as a real variety.  
\end{theorem}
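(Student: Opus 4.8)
The plan is to reproduce the argument of Theorem~\ref{theorem2} verbatim at the level of the underlying complex varieties, and to carry the anti-holomorphic involutions through every step so that each intermediate object acquires a real structure and every birational map is defined over $\mathbb{R}$. The one genuinely new input, absent from the complex case, is the following elementary fact that I would isolate at the outset: \emph{the total space of a real algebraic vector bundle over a real rational variety is again real rational.} Over $\mathbb{C}$ one simply trivializes the bundle on a dense open set and reads off a product; over $\mathbb{R}$ an arbitrary trivialization need not respect the involution, so instead I would argue at the generic point. The generic fibre of a real vector bundle $V$ over a base $B$ is a finite-dimensional vector space over the function field $K = \mathbb{R}(B)$, hence admits a $K$-basis, which furnishes $\mathbb{R}$-rational trivializing sections on a dense $\sigma$-invariant open subset. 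Thus the total space is $\mathbb{R}$-birational to $B \times_{\mathbb{R}} \mathbb{A}^n_{\mathbb{R}}$, and since $B$ being real rational means $K$ is purely transcendental over $\mathbb{R}$, so is $K(s_1,\dots,s_n)$; the total space is therefore real rational.

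With this in hand I would proceed as follows. Start from the $\sigma_M$-invariant open subset $M' \subset M^s_{Z,H_Z}(r,c_1,c_2)$ fixed just before Lemma~\ref{l4.1}, on which $\mathcal{H}_M|_{M'\times P}$ is locally free and the generic condition $E|_{P_{x_j}} \cong \tau_j^*(E|_{P_{z_j}})$ holds. Since $M^s_{Z,H_Z}(r,c_1,c_2)$ is a dense open subscheme of $M_{Z,H_Z}(r,c_1,c_2)$ it is real rational by hypothesis, and hence so is its dense $\sigma_M$-invariant open subset $M'$. By Lemma~\ref{l4.1} the bundle $\mathcal{H}_M|_{M'\times P}$ carries the anti-holomorphic involution $\sigma_H$ lifting $\sigma_M \times \sigma_P$, so it descends to an algebraic vector bundle over the real variety $M'_{\mathbb{R}} \times_{\mathbb{R}} P_{\mathbb{R}}$. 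Because $P$ is a real variety and the projection $p_{M'}\colon M'\times P \to M'$ is equivariant for $\sigma_M\times\sigma_P$ and $\sigma_M$, the direct image $\widetilde H := R p_{M'*}(\mathcal{H}_M|_{M'\times P})$ inherits a real structure: it is the base change to $\mathbb{C}$ of the direct image of the descended sheaf along $p_{M',\mathbb{R}}$, by flat base change for $\mathbb{C}/\mathbb{R}$. Restricting to a further $\sigma_M$-invariant open subset $M'' \subset M'$ on which $\widetilde H$ is locally free (semicontinuity, applied $\sigma_M$-equivariantly), I obtain a real vector bundle $\widetilde H|_{M''}$ over the real rational variety $M''$.

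By the isolated fact above, the total space of $\widetilde H|_{M''}$ is real rational. The locus $\widetilde H' \subset \mathrm{Tot}(\widetilde H|_{M''})$ parametrizing tuples $(\sigma_j)_j$ of isomorphisms, i.e.\ honest GPB structures, is a Zariski open subset; it is $\sigma$-invariant because $\sigma_H$ acts by $f \mapsto \overline f$ and $f$ is an isomorphism if and only if $\overline f$ is, so $\widetilde H'$ is again real rational. Finally, the correspondence of Section~\ref{4.1} between GPBs on $Z$ and vector bundles on $X$ is built from the finite real morphism $p\colon Z \to X$ and the real identifications $\tau_j$ of \eqref{tau}; it is therefore defined over $\mathbb{R}$ and, by Proposition~\ref{prop3}, carries $\sigma$-stable points to $\sigma_X$-stable points. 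It thus identifies $\widetilde H'$, as a real variety, with a dense $\sigma_X$-invariant open subset of $M_{X,H}(r,c_1,c_2)$. Being $\mathbb{R}$-birational to the real rational variety $\widetilde H'$, the moduli space $M_{X,H}(r,c_1,c_2)$ is rational as a real variety.

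The main obstacle is the middle paragraph: one must verify that the real structure genuinely survives the push-forward $R p_{M'*}$ and the passage to a locally free locus, which is why I would phrase everything through descent to $\mathbb{R}$ followed by flat base change rather than by manipulating the conjugate-linear involutions directly. A secondary point requiring care is that the GPB reconstruction commutes with the involutions (the permutation of the nodes induced by $\sigma_Y$ is built into $\sigma_P$), equivalently that the kernel defining $F$ in Section~\ref{4.1} is formed $\sigma$-equivariantly; this is automatic once $p$, the $\tau_j$, and the gluing data $\sigma_j$ are all seen to be real.
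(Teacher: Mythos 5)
Your proof is correct and takes essentially the same route as the paper's: push $\mathcal{H}_M\vert_{M'\times P}$ forward to a real vector bundle $V$ over the $\sigma_M$-invariant open subset $M'$, pass to the Zariski open locus of fibrewise isomorphisms (the honest GPB structures), and identify that locus over $\mathbb{R}$ with an open subset of $M_{X,H}(r,c_1,c_2)$ via the generalized parabolic bundle correspondence. Your explicit justification that the total space of a real vector bundle over a real rational base is real rational (via a trivialization at the generic point over $\mathbb{R}(M')$) simply fills in a step the paper asserts without comment.
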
 
 
\begin{proof}
Since ${\mathcal H}_M\vert_{M'\times P}$ is a real vector bundle (see Lemma \ref{4.1}), so is its direct image on $M'$. Let 
$$V \ = \ {p_{M'}}_* ({\mathcal H}_M\vert_{M'\times P})\, .$$
 The involution $\sigma_H$ induces an anti-holomorphic involution $\sigma_V$ on $V$. By replacing $M'$ by a $\sigma_M$-invariant open subset if necessary, we see that there is a Zariski open subset $U$ of the total space of $V$ such that for $E\in M'$ the fiber $U_E = \bigoplus_j {\rm Iso} (E_1, E_2)$ and
$U\, \longrightarrow\, M'$ is a locally trivial fiber bundle with fibers isomorphic to an (fixed) affine space. The involution $\sigma_V$ keeps $U$ invariant. Thus $U$ is a real variety. 
 
Since $M'$ is rational as a real variety by our assumption, the above subset $U$ is also 
rational as a real variety. The real variety $M_{X,H}(r,c_1,c_2)$ has an open 
subset isomorphic to $U$. It follows that $M_{X,H}(r,c_1,c_2)$ is rational as a 
real variety.
\end{proof}

\begin{theorem}\label{theorem4}
Let $\sigma$ be an anti-holomorphic involution on ${\mathbb P}^1_{\mathbb C}$ defined by 
 $$ \sigma(x:y)\,=\,({\overline y}, - {\overline x})\, .$$
The pair $({\mathbb P}^1_{\mathbb C}, \sigma)$ defines a non-degenerate anisotropic conic $C$ 
over ${\mathbb R}$. 

Let $c_1= C_0 + d F_Z, F_Z$ being the general fiber of $Z$.  Let $c_2, \alpha, \lambda$  and $m$ be integers satisfying 
$$c_2\,=\, \lambda (r-1) + \alpha\, ,\, \ 0 \,<\, \alpha\,\le\, r-1\, ,\ m
\,=\, d- c_2 - 1 - \lambda\, .$$
Assume that 
$$\Delta(r,c_1,c_2)\,:=\, \frac{1}{r} (c-2 - \frac{r-1}{2r} c_1^2)\,>>\, 0\, .$$ 
Then $M_{X,H}(r,c_1,c_2)$ is rational as a real variety if and only if one of the following conditions holds:
\begin{enumerate}
\item Both the integers $m$ and $r-1- \alpha$ are even.\\ 
\item The integer $m$ is odd and $\alpha$ is even.
\end{enumerate}
\end{theorem}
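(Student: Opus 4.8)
The plan is to deduce Theorem \ref{theorem4} by combining the transfer result of Theorem \ref{theorem3} with the classification of real rationality on the Hirzebruch surface $Z_{\mathbb R}$ obtained in \cite{BS}. First I would record that for the involution $\sigma(x:y)=(\overline y,-\overline x)$ the normalization $C$ is the anisotropic conic, so that $Z_{\mathbb R}$ is precisely the real ruled surface over an anisotropic conic treated in \cite{BS}. With the parametrization $c_1=C_0+dF_Z$, $c_2=\lambda(r-1)+\alpha$ with $0<\alpha\le r-1$ and $m=d-c_2-1-\lambda$, and under $\Delta(r,c_1,c_2)\gg 0$, the main result of \cite{BS} states that $M^s_{Z_{\mathbb R},H_Z}(r,c_1,c_2)$ is rational as a real variety if and only if condition (1) or condition (2) holds. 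This is exactly the numerical dichotomy appearing in the statement.

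For the sufficiency direction the argument is immediate: if (1) or (2) holds then $M^s_{Z_{\mathbb R},H_Z}(r,c_1,c_2)$ is real rational by \cite{BS}, and Theorem \ref{theorem3} then gives that $M_{X,H}(r,c_1,c_2)$ is real rational. The work lies in the necessity direction, for which Theorem \ref{theorem3} as stated is one-sided; I would strengthen its proof to a two-sided birational comparison. Recall that the proof of Theorem \ref{theorem3} produces a $\sigma_M$-invariant dense open $M'\subset M^s_{Z_{\mathbb R},H_Z}(r,c_1,c_2)$, the real vector bundle $V\,=\,(p_{M'})_*(\mathcal H_M\vert_{M'\times P})$ on $M'$, and an open subset $U$ of the total space of $V$ that is real-isomorphic to an open subset of $M_{X,H}(r,c_1,c_2)$, where $U\to M'$ is a Zariski-locally trivial bundle with affine-space fibers. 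Since an algebraic vector bundle on the real variety $M'$ is Zariski-locally trivial over $\mathbb R$, the total space of $V$, and hence $U$, is real-birational to $M'\times\mathbb A^n$ for the appropriate $n$. Consequently
\begin{equation}\label{birid}
M_{X,H}(r,c_1,c_2)\ \sim_{\mathbb R}\ M^s_{Z_{\mathbb R},H_Z}(r,c_1,c_2)\times\mathbb A^n
\end{equation}
as real varieties.

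From \eqref{birid} one reads off that $M_{X,H}(r,c_1,c_2)$ is real rational if and only if $M^s_{Z_{\mathbb R},H_Z}(r,c_1,c_2)$ is \emph{stably} real rational. To close the necessity direction I would invoke that the obstruction to real rationality exhibited in \cite{BS} is a stable birational invariant, hence insensitive to the factor $\mathbb A^n$ in \eqref{birid}; thus whenever conditions (1) and (2) both fail, $M^s_{Z_{\mathbb R},H_Z}(r,c_1,c_2)$ fails to be stably real rational, and by \eqref{birid} so does $M_{X,H}(r,c_1,c_2)$. Together with the sufficiency direction this yields the asserted equivalence.

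The main obstacle I anticipate is exactly this last point. Theorem \ref{theorem3} only transfers rationality in one direction, and \eqref{birid} upgrades it to an equivalence only at the level of stable real rationality; the gap between stable and genuine rationality does not close automatically over $\mathbb R$. The necessity direction therefore hinges on verifying that the invariant detecting non-rationality in \cite{BS} does not see the extra affine factor, i.e. that it is already an obstruction to stable real rationality. I would settle this by identifying that obstruction (a $2$-torsion, Brauer-type parity invariant attached to the anisotropic conic) as one that is unchanged under $M\mapsto M\times\mathbb A^n$; once this is in place the dichotomy $\{(1),(2)\}$ transports verbatim from $Z_{\mathbb R}$ to $X_{\mathbb R}$ and the theorem follows.
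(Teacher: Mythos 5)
Your sufficiency direction coincides with the paper's entire proof, which is the single sentence ``The theorem follows from Theorem \ref{theorem3} and the main theorem of \cite{BS}.'' Where you genuinely depart from the paper is in noticing that this one\-/line deduction only yields the ``if'' half: Theorem \ref{theorem3} transfers real rationality from $M^s_{Z,H_Z}$ to $M_{X,H}$ but not conversely, so the ``only if'' half of the stated equivalence does not literally follow from the two cited ingredients. Your repair is the right one: the construction in the proof of Theorem \ref{theorem3} exhibits a dense open subset of $M_{X,H}(r,c_1,c_2)$ as a dense open subset of the total space of the real vector bundle $V$ on $M'$ (the fibers ${\rm Iso}(E_1,E_2)$ are nonempty --- since $E_1\cong E_2$ on $M'$ --- and Zariski open in ${\rm Hom}(E_1,E_2)$, and $M_{X,H}$ is irreducible by Theorem \ref{thm1}), and since a locally free sheaf on the real form of $M'$ is Zariski\-/locally trivial, this upgrades the one\-/sided transfer to the real birational equivalence $M_{X,H}(r,c_1,c_2)\sim_{\mathbb R} M^s_{Z,H_Z}(r,c_1,c_2)\times\mathbb A^n$. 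The only step you leave unexecuted is the verification that the non\-/rationality in \cite{BS} survives the extra $\mathbb A^n$ factor; this does close, because in \cite{BS} the non\-/rational cases are detected by the emptiness of the real locus of the (smooth) moduli space, and a smooth positive\-/dimensional real variety that is real rational must have real points, a property unaffected by taking the product with $\mathbb A^n$ or passing to dense open subsets. So your argument is correct and is in fact more complete than the paper's, which silently asserts the necessity direction; it would be worth stating the two\-/sided birational comparison explicitly, since Theorem \ref{theorem3} as formulated does not deliver it.
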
  
\begin{proof}
The theorem follows from Theorem \ref{theorem3} and the main theorem of \cite{BS}. 
\end{proof}

\end{document}